\documentclass[12pt]{iopart}

\expandafter\let\csname equation*\endcsname\relax
\expandafter\let\csname endequation*\endcsname\relax
\usepackage[T1]{fontenc}
\usepackage{amsmath,amssymb,amsthm}
\usepackage{graphicx}
\usepackage{bm}
\usepackage{booktabs}
\usepackage{array}
\usepackage{pgfplots}
\usepackage{tikz}
\usepackage{placeins}
\usetikzlibrary{arrows.meta,positioning,calc}
\pgfplotsset{compat=1.17}
\usepackage{iopams}
\usepackage{hyperref}
\usepackage{url}

\hypersetup{
  hidelinks,
  pdftitle={Uniqueness and reconstruction of reciprocal constitutive generators from equilibrium response maps},
  pdfauthor={Jonathan Washburn and Megan Simons},
  pdfsubject={Inverse problems; structural identifiability; reconstruction; equilibrium response maps},
  pdfkeywords={inverse problem, structural identifiability, reconstruction, equilibrium response map, inverse optimization, Schroeder equation, stability, reciprocal generator}
}

\theoremstyle{plain}
\newtheorem{theorem}{Theorem}
\newtheorem{corollary}[theorem]{Corollary}

\newtheorem{proposition}{Proposition}
\theoremstyle{definition}
\newtheorem{definition}{Definition}

\begin{document}

\title[Reciprocal generator reconstruction]{Uniqueness and reconstruction of reciprocal constitutive generators from equilibrium response maps}
\author{Jonathan Washburn$^1$\footnote{ORCID: 0009-0001-8868-7497} and Megan Simons$^1$\footnote{Corresponding author. ORCID: 0000-0001-9457-7019}}
\address{$^1$ Recognition Science, Recognition Physics Institute, 1401 Lavaca St \#41100, Austin, TX 78701, USA}
\ead{msimons@recognitionphysics.org}
\date{23 October 2026}
\begin{abstract}
We recover a shared constitutive generator from equilibrium locations on
weighted graphs. For an even $C^2$ log-coordinate
potential with positive curvature, the complete two-site response map
determines the generator up to positive scale. A sweep over all positive
boundary values at one fixed, possibly unknown, unequal weight ratio gives
global identification through a Schr\"oder--Koenigs iteration. We extend
this result to a three-vertex chain with two observed free vertices under a
sweep condition. For variable-ratio control, we prove finite-target
recovery, local Lipschitz stability, and noisy-bisection bounds. Fixed-ratio
stability and nonparametric estimation under uncertain controls remain open.
In the reciprocal family~$J_p$, an exact Gaussian model-selection
probability benchmarks synthetic experiments under observation noise and
errors in variables. These are identifiability results under controls; no
measured data or physical system is analyzed.
\end{abstract}
\noindent{\it Keywords\/}: inverse problem, structural identifiability, constitutive reconstruction, equilibrium response map, inverse optimization, Schr\"oder equation

\noindent{\it Mathematics Subject Classification (2020)\/}: Primary 93B30; Secondary 49N45

\maketitle
\section{Introduction}
\label{sec:intro-scope}

This paper asks when, and how stably, a scalar constitutive law can be
recovered from equilibrium locations alone. The setting is a finite weighted
graph whose site and edge costs share one unknown even generator~$\Psi$ of
log-ratios. The data are constrained equilibrium locations under controlled
boundary values and site-to-edge weight ratios; neither fluxes nor objective
values are observed. The answers are constructive and are summarized in
Section~\ref{sec:contributions}.

\subsection{Inverse-problem formulation}
\label{sec:formulation}
Let $\Psi:\mathbb{R}\to\mathbb{R}_{\ge0}$ be even, $C^2$, with $\Psi(0)=0$ and
$\Psi''(t)>0$ for all~$t$, and write $g:=\Psi'$. On the elementary two-site
exchange with free log-coordinate~$v$, pinned boundary~$u>0$, and weight
ratio~$r>0$, the forward response is
\begin{equation}
\label{eq:response-op}
\mathcal{R}_{\Psi}(u,r)
  \;:=\;
  \arg\min_{v\in\mathbb{R}}
  \bigl\{r\,\Psi(v)+\Psi(u-v)\bigr\}.
\end{equation}
The inverse problem is to recover~$\Psi$ (equivalently~$g$) from the map
$\mathcal{R}_{\Psi}$. An overall positive scale of~$\Psi$ leaves every constrained
minimizer unchanged and is therefore not identifiable from equilibrium locations
alone; unit log-curvature $g'(0)=1$ removes that ambiguity.

Two control designs are used. Variable-ratio designs vary both $u$ and~$r$;
each equilibrium then yields one ratio of values of~$g$, which is the basis of
finite-target recovery and of the stability estimates. Fixed-ratio designs
sweep~$u$ at one constant, possibly unknown, ratio $r\neq1$ and identify~$g$
through an iteration. Equal weights, $r=1$, give $v^\star=u/2$ for every~$\Psi$
and carry no information. On larger graphs the design also specifies which
vertices are pinned, and on the two-free-vertex chain both free locations are
observed.

\subsection{Contributions}
\label{sec:contributions}
The main results are:
\begin{enumerate}
  \item \emph{Uniqueness with an explicit inversion formula}
    (Theorem~\ref{thm:response-ident}). The complete response map gives every
    ratio $g(s)/g(w)$ through $r=g(u-v^\star)/g(v^\star)$ and hence determines
    $\Psi$ up to its unavoidable positive scale.
  \item \emph{Identification at one fixed ratio}
    (Theorem~\ref{thm:fixed-ratio}). One boundary sweep at a constant, initially
    unknown $r\neq1$ determines both $r$ and~$\Psi$ up to scale: the response
    supplies an attracting map~$T$ with
    $g\circ T=g/\max\{r,1/r\}$, and Koenigs' limit recovers~$g$.
  \item \emph{Finite-target recovery and stability}
    (Corollaries~\ref{cor:finite-query}--\ref{cor:stability},
    Proposition~\ref{prop:bisection}). We give residual and local Lipschitz
    bounds and a noisy-bisection guarantee under bounded or replicated Gaussian
    location errors; the constants depend on the unknown generator.
  \item \emph{Single-free-vertex reduction} (Corollary~\ref{cor:graph-reduction}).
    A free vertex with pinned neighbors reduces to the two-site equation with
    an effective ratio.
  \item \emph{A chain with two free vertices} (Theorem~\ref{thm:chain}). With
    both free locations observed, an explicit sweep condition and unequal leaf
    weights identify the generator without recorded boundary values or known
    weights. General multi-free-vertex graphs are not treated.
\end{enumerate}
The analytic tools are classical (strict convexity, the implicit-function and
mean-value theorems, and Schr\"oder--Koenigs linearization). The contribution is
the structured, nonparametric identification result, not a general theory of
inverse optimization or functional equations; Section~\ref{sec:related}
locates it relative to prior work.

The reciprocal cost $J(x)=\tfrac12(x+x^{-1})-1$ plays a secondary role, as an
independently motivated candidate selected by a ratio-composition
law~\cite{WashburnAczel}; none of the inverse results depends on it. Closed-form
responses for the calibrated family~$J_p$ support the synthetic experiments.

\subsection{What is proved, what is simulated, and what is not attempted}
\label{sec:levels}
We distinguish three levels of claim.
\begin{enumerate}
  \item[(a)] \emph{Structural identifiability and finite-query recovery:
    proved} (Section~\ref{sec:ident}). The variable-ratio inversion underlies
    the local error and noisy-bisection bounds. The fixed-ratio and chain
    theorems instead use an iterated-orbit limit and establish exact-data
    identification only; their noise behavior is unresolved.
  \item[(b)] \emph{Synthetic finite-data model selection: simulated}
    (Section~\ref{sec:numerics}). Within~$J_p$, a fixed $15$-condition design
    yields simulated selection probabilities, a Fisher-information bound, and
    least-squares behavior under stated noise models. These are design
    properties, not nonparametric estimation of~$g$.
  \item[(c)] \emph{Experimental reconstruction: not attempted.} No measured
    data are analyzed; Section~\ref{sec:outlook} lists what a physical
    realization would have to validate.
\end{enumerate}
Controlled equilibrium response in networked transport and biochemical
exchange, where ratios to local references are natural states and boundaries
and conductances are natural controls, motivates the formulation.

\subsection{Relation to previous work}
\label{sec:related}
Table~\ref{tab:related} summarizes the comparison developed below. For each
adjacent framework we state what is established, how the present problem
differs, and what remains open.

\begin{table}[htbp]
\centering
\caption{The present inverse problem compared with adjacent frameworks.
Entries describe typical settings, not every result in each literature.}
\label{tab:related}
\footnotesize
\begin{tabular}{@{}>{\raggedright\arraybackslash}p{0.19\linewidth}
                >{\raggedright\arraybackslash}p{0.22\linewidth}
                >{\raggedright\arraybackslash}p{0.23\linewidth}
                >{\raggedright\arraybackslash}p{0.28\linewidth}@{}}
\toprule
Framework & Unknown & Data & Typical guarantee \\
\midrule
Discrete Calder\'on (resistor networks)
& Edge conductances; Ohmic law known
& Boundary voltage--current map
& Uniqueness under combinatorial conditions on the graph \\
\addlinespace
Quasilinear Calder\'on; nonlinear networks
& Spatially varying or edge-specific nonlinear law
& Nonlinear boundary voltage--current (Dirichlet-to-Neumann) map
& Uniqueness via linearization or structural assumptions; logarithmic
  stability already in the linear continuum problem \\
\addlinespace
Inverse optimization; revealed preference
& Objective parameters or function
& Decisions assumed optimal for known forward problems
& A set of consistent objectives; an estimate selected by normalization,
  hypothesis class, or regularization \\
\addlinespace
Parametric structural identifiability
& Finite parameter vector
& Noiseless input--output map
& Injectivity of the parameter-to-output map \\
\addlinespace
This paper
& One shared even generator~$\Psi$ in class~\eqref{eq:Psi-class}
& Equilibrium locations under controlled $(u,r)$, or under a boundary sweep
  at one fixed ratio
& Uniqueness up to scale; explicit inversion; fixed-ratio identification;
  local Lipschitz stability \\
\bottomrule
\end{tabular}
\end{table}

\paragraph{Inverse optimization and revealed preference.}
Inverse optimization infers an objective from decisions assumed optimal, or
nearly optimal, for a family of forward
problems~\cite{AhujaOrlin2001,ChanMahmoodZhu2025}. Established results include
the minimal perturbation of a linear objective that makes a given solution of
a linear or combinatorial program optimal~\cite{AhujaOrlin2001}, imputation of
convex objectives parametrized by pre-selected basis functions through
minimization of optimality-condition residuals~\cite{Keshavarz2011},
estimation under noisy observations with consistency
guarantees~\cite{AswaniShenSiddiq2018}, nonparametric convex risk
functions~\cite{Li2021}, parametric or kernel-based nonparametric
estimation of utility and congestion functions from observed Nash and Wardrop
equilibria through inverse variational inequalities~\cite{Bertsimas2015}, and
robust formulations for noisy and partially observed decisions that trade a
duality-gap or Karush--Kuhn--Tucker residual against a distance to the
observations~\cite{ThaiBayen2018}.
Revealed-preference theory is an early instance: finite choice data are
rationalizable by a continuous, concave, monotone utility exactly when they
satisfy a cyclical consistency condition, and the rationalizing utility is then
generally not unique~\cite{Afriat1967,Varian1982}. Common to these programs is
that observations restrict the objective to a set of consistent candidates.
Trivial objectives, for which every feasible decision is optimal, are excluded
by normalization, and a particular estimate is selected by a hypothesis class
or by regularization; when the objective is homogeneous, the imputed objective
is fixed only up to a multiplicative factor~\cite{Keshavarz2011}, the analogue
of the scale ambiguity below. The present setting is a structured
special case in which the consistent set can be computed exactly. Every
forward problem in the family $r\Psi(v)+\Psi(u-v)$ shares one unknown
function, so each equilibrium yields one homogeneous linear relation
$r\,g(v^\star)=g(u-v^\star)$ between two values of~$g$. Varying $(u,r)$
supplies every such relation, and the consistent set within
class~\eqref{eq:Psi-class} collapses to the ray $\{c\Psi:c>0\}$.
Theorem~\ref{thm:response-ident} is therefore a uniqueness and inversion
theorem for an unrestricted function rather than an estimator, and the scale
ray is the exact residual ambiguity rather than a normalization choice.
The comparison with inverse variational
inequalities~\cite{Bertsimas2015,ThaiBayen2018} is the closest. There, the
unknown is a general variational-inequality map, possibly vector-valued and
coupling many agents, sought in a chosen parametric or kernel-based hypothesis
class; the data are the equilibria that the application provides; and the
analysis concerns the computation and statistical performance of the
resulting estimate. Here the experimenter chooses the forward problems, the
unknown is one scalar generator entering every term of every objective, and
only equilibrium locations are observed. That restriction is what makes exact
recovery possible here without a hypothesis class: each designed equilibrium links~$g$ at two known
arguments, and the question becomes which designs make the generator
identifiable. Joint
variation of~$(u,r)$ suffices, and so does one fixed-ratio boundary sweep
(Theorem~\ref{thm:fixed-ratio}); equal weights do not. For the finite noisy
version of the present problem the estimators above are the natural tools.
Proposition~\ref{prop:bisection} analyzes only a target-by-target procedure
with exact controls, and nonparametric estimation from finitely many noisy
equilibria is not treated here.

\paragraph{Boundary-response and nonlinear network inverse problems.}
The Calder\'on problem recovers a conductivity from the
Dirichlet-to-Neumann map~\cite{Calderon1980,SylvesterUhlmann1987,AstalaPaivarinta2006,Uhlmann2009}.
Its discrete analogue recovers the edge conductances of a resistor network from
boundary voltage--current data, with uniqueness governed by the combinatorics
of the
graph~\cite{CurtisMorrow1990,CurtisMorrow1991,CurtisIngermanMorrow1998,CurtisMorrow2000,ColinDeVerdiere1994,GernandtRohleder2022}.
Nonlinear extensions recover quasilinear conductivities $\gamma(x,u)$ by
linearizing the Dirichlet-to-Neumann map about constant boundary data, which
reduces the problem, one level~$t$ at a time, to linear Calder\'on
problems~\cite{Sun1996,SunUhlmann1997}; higher-order linearization extends
uniqueness to broader quasilinear classes~\cite{Carstea2021}. Related work
treats boundary determination for the $p$-Laplacian~\cite{SaloZhong2012},
monotonicity-based tomography of nonlinear conductors~\cite{CorboEsposito2021},
and circular planar networks with nonlinear and signed
conductors~\cite{Johnson2012}. For the continuum Calder\'on problem, stability
under a~priori bounds is logarithmic~\cite{Alessandrini1988}, and this rate
cannot be substantially improved~\cite{Mandache2001}. Three features separate
the present problem. First, the roles of law and weights are exchanged: in
network Calder\'on problems the constitutive law is known or edge-specific and
the weights are unknown, whereas here the weights are controls and the single
shared law is the unknown. Second, the data are equilibrium locations only; no
boundary current is measured, so no Dirichlet-to-Neumann map is available.
Third, boundary control plays a role similar to the constant-data
linearization of~\cite{Sun1996}, selecting the arguments at which the
nonlinearity is probed, but the comparison is exact rather than infinitesimal:
each query compares~$g$ at the two finite arguments $u-v^\star$ and~$v^\star$.
Because the unknown is a function of one variable, probed pointwise through
a scalar implicit equation, local stability of the variable-ratio inversion is
Lipschitz (Corollary~\ref{cor:stability}). This does not improve on Calder\'on
stability. The logarithmic rate there concerns a coefficient of several
spatial variables recovered from a boundary operator, a problem with
different data and unknowns, and the contrast only indicates that the present
problem does not inherit that source of ill-posedness. The fixed-ratio
iteration of Theorem~\ref{thm:fixed-ratio} is not covered by this estimate.
What those programs establish and the present paper does not is recovery on
general multi-vertex networks, where stationarity is a coupled system.
Theorem~\ref{thm:chain} treats one coupled configuration, a chain with two
free vertices; general graphs with several free vertices remain open.

\paragraph{Structural identifiability and related inverse questions.}
Structural identifiability asks whether model parameters are determined by
noiseless, complete input--output
data~\cite{BellmanAstrom1970,LjungGlad1994,WalterPronzato1997}; methods for
systems-biology models are compared in~\cite{Chis2011,Villaverde2016}, and
practical identifiability from finite noisy data is assessed separately, for
example by profile likelihood~\cite{Raue2009}. These frameworks are
predominantly parametric. Theorem~\ref{thm:response-ident} is a nonparametric
structural statement: the unknown ranges over an infinite-dimensional class,
the residual non-identifiability is a one-parameter scaling group identified
exactly, and the proof yields an explicit inverse. Section~\ref{sec:numerics}
supplies the parametric practical counterpart within~$J_p$. Equation
discovery selects governing terms from a library given trajectory
data~\cite{BruntonProctorKutz2016}; it is a model-selection procedure rather
than a uniqueness result. The inverse problem of the calculus of variations
asks whether a given system of differential equations is the Euler--Lagrange
system of some Lagrangian; Douglas classified when a second-order system in
two dependent variables admits one, and how many~\cite{Douglas1941}. Here the
variational structure is given and only its minimizers are observed. Finally,
a lower semicontinuous proper convex function on a Banach space is determined
up to an additive constant by its subdifferential~\cite{Rockafellar1966}.
Equilibrium locations carry less information than gradients, since they are
invariant under positive rescaling of the objective; this is why the residual
ambiguity in Theorem~\ref{thm:response-ident} is multiplicative, the additive
constant being fixed by $\Psi(0)=0$. The inversion
formula~\eqref{eq:inversion} can be read as recovering ratios of gradients
from minimizer locations.

\paragraph{Functional equations and divergences.}
Theorem~\ref{thm:fixed-ratio} is an instance of Schr\"oder's functional
equation $g\circ T=\varrho^{-1}g$, with a constant $\varrho>1$, and of Koenigs'
linearization of a map near an attracting fixed
point~\cite{Schroeder1870,Koenigs1884,KuczmaChoczewskiGer1990}. That theory is
classical. It concerns existence and uniqueness of solutions for a given~$T$;
the observation used here is that a fixed-ratio response curve supplies~$T$,
with the unknown generator as a solution. The same machinery has been used in
an inverse problem before, to recover the vector field of a scalar autonomous
ordinary differential equation from sampled trajectories by estimating the
time-one map and solving Schr\"oder's and Julia's equations, with continuous
dependence of the recovered field on that map~\cite{AlfaroVigo2020}. There the conjugated map is a flow map estimated from time
series; here it is assembled from the two complementary equilibrium locations
of a static fixed-ratio sweep, and the unknown is a constitutive generator
rather than a vector field.

Closer in form, Peluso and Trannoy start from an observed nonlinear sharing
curve at a prescribed weight, form its complementary map, and use iterates to
construct compatible marginal utilities~\cite{PelusoTrannoy2012}. Their result
is a rationalizability construction with the weight fixed in advance, not
joint identification of an unknown weight and a common generator.

The response also belongs to the classical theory of deviation means. With
$E_g(a,v):=g(a-v)$, which is a deviation and hence a quasideviation, $V(u)$ is
the two-point weighted quasideviation mean of $(0,u)$ with weights $(r,1)$.
Translation equivariance gives $M_{g,r}(a,b)=a+V(b-a)$ for $a\le b$, while
oddness determines the reversed order. Deviation and weighted quasideviation
means have developed characterization and equality
theories~\cite{Pales1982,DaroczyPales1982,PalesPasteczka2020}; fixed-weight
binary equality, including unknown nonsymmetric weights under additional
differentiability assumptions, is also known in narrower quasi-arithmetic and
Bajraktarevi\'c classes~\cite{PalesZakaria2020,PalesZakaria2025}. We have not
located in that literature the specific inverse statement proved here for a
general odd translation-invariant deviation generator: from $V$ at one fixed
unequal but initially unknown ratio, one identifies both the ratio and the
generator up to a positive scale. The failure at $r=1$ in
Theorem~\ref{thm:fixed-ratio} shows why the unequal-weight qualification is
essential.

Algebraically, $2J(x/y)$ is the additive symmetrization of the Itakura--Saito
(COSH) divergence~\cite{Eguchi2006,CichockiAmari2010,BanerjeeMBG2005,Amari2016},
and the ratio-composition characterization of~$J$ reduces to d'Alembert's
functional equation~\cite{Aczel1966,WashburnAczel}. These results characterize
a candidate cost; they do not address recovery of a cost from equilibrium
data, and none of the inverse results depends on them. Surveys of inverse
problems and regularization~\cite{EnglHankeNeubauer1996,Isakov2006,Tarantola2005}
supply general methodological context.

\paragraph{Summary of novelty.}
Established in the literature are uniqueness of spatial coefficients and edge
laws from boundary current--voltage maps, with logarithmic stability in the
continuum; estimation of objectives and of variational-inequality functions
from observed decisions and equilibria, with guarantees that depend on the
observations and the hypothesis class; parametric structural identifiability;
prescribed-weight sharing-rule constructions; fixed-weight identification in
narrower classes of means; and the classical Schr\"oder--Koenigs theory,
including its use to recover a vector field from sampled trajectories. Against
this background, the new elements are those of
Section~\ref{sec:contributions}: identification of an unrestricted shared
generator from location-only data, including simultaneous recovery of the
ratio and generator from one fixed unequal-ratio sweep, together with
finite-query bounds and two graph extensions. Open problems are listed in
Section~\ref{sec:discussion}.

\subsection{Organization}
Section~\ref{sec:forward} records the forward problem; Section~\ref{sec:ident}
proves the identification results, the finite-target and stability bounds,
and the graph extensions; Section~\ref{sec:reciprocal}
introduces the reciprocal candidate~$J$, the family~$J_p$, and the specialized
discriminators; Section~\ref{sec:numerics} reports finite-grid experiments;
Section~\ref{sec:outlook} discusses physical interpretation and scope;
Section~\ref{sec:discussion} concludes. Gradient-flow and Langevin realizations
of the forward equilibrium are standard once the potential is fixed and are
collected in Supplemental~S2--S3.

\FloatBarrier

\section{Forward problem on weighted graphs}
\label{sec:forward}
\label{sec:cost}

Throughout, $x>0$ denotes a multiplicative deviation from a site-specific
reference. Write $F(x)$ for a reciprocal scalar cost on~$x$ and
$\Psi(t):=F(e^t)$ for its log-coordinate representation. The inverse
analysis in Section~\ref{sec:ident} applies to the general class
\begin{equation}
\label{eq:Psi-class}
\begin{aligned}
&\Psi\in C^2(\mathbb{R}),\quad
\Psi\text{ even},\quad
\Psi(0)=0,\quad
\Psi''(t)>0\ \text{for all }t,\\
&g:=\Psi'\quad\text{(odd, strictly increasing, }g(0)=0\text{)}.
\end{aligned}
\end{equation}
In particular, $g'(0)=\Psi''(0)>0$. This nondegeneracy at the origin is
essential to the unknown-ratio conclusion of
Theorem~\ref{thm:fixed-ratio}.
The reciprocal cost $J$ of Section~\ref{sec:reciprocal} is the special case
$g=\sinh$; it is not assumed in the reconstruction theorem.

\subsection{Graph functional}
Fix a finite set of sites~$X$ and a symmetric edge set~$\mathcal{E}$ of
unordered adjacent pairs. Equip each site~$s$ with a weight~$w_s>0$ and each
edge~$\{s,s'\}$ with a weight~$w_{ss'}>0$. A positive multiplier field is a map
$m:X\to\mathbb{R}_{>0}$; a boundary condition pins a subset
$X_\partial\subseteq X$ to prescribed positive values~$b$. The uniform field
assigns~$1$ to every site and is the unique absolute zero of the functional
constructed below. Collectively, the graph, weights, references, and
measurement transforms constitute the representation~$\mathcal{M}$.

Let $a_s>0$ denote an unnormalized site attribute and $\kappa_{ss'}>0$ the
corresponding edge attribute. Because site and edge attributes generally have
different physical dimensions, their relative strength is controlled by an
explicit site-to-edge coupling~$\lambda>0$. Let $a_{\mathrm{ref}}$ and
$\kappa_{\mathrm{ref}}$ be positive reference scales fixed once for the entire
protocol, and write $\bar a_s=a_s/a_{\mathrm{ref}}$ and
$\bar\kappa_{ss'}=\kappa_{ss'}/\kappa_{\mathrm{ref}}$. The reference scales must
not be renormalized separately in each sweep condition. Then
\[
w_s=\bar a_s,\qquad w_{ss'}=\lambda\,\bar\kappa_{ss'}.
\]
Physical quantities that could motivate these attributes include normalized
compartment volume or molecule-count capacity for sites, and contact area,
channel density, or measured transport conductance for
edges~\cite{Weiss1996}. A common overall multiplicative scale of the cost leaves
every constrained minimizer unchanged and is therefore not identifiable from
equilibrium locations; Supplemental~S4 records the resulting
non-identifiabilities. Two objects are kept distinct from these graph weights:
statistical inverse-variance weights of the observation model, and any global
scale relating the functional to a physical energy
(Section~\ref{sec:outlook}).

\begin{definition}[Reference-relative deviation functional]
\label{def:deviation-functional}
For $\Psi$ in class~\eqref{eq:Psi-class} and a positive multiplier field~$m$
on~$X$,
\begin{equation}
\label{eq:graph-functional}
C_{\mathcal{M},\Psi}[m]
  \;=\; \sum_{s\in X} w_s\,\Psi\bigl(\log m(s)\bigr)
    \;+\; \sum_{\{s,s'\}\in\mathcal{E}}
      w_{ss'}\,\Psi\!\left(\log\frac{m(s)}{m(s')}\right).
\end{equation}
When $\Psi(t)=\cosh t-1$ we write $C_{\mathcal{M}}$ for
$C_{\mathcal{M},\Psi}$ and recover the reciprocal specialization of
Section~\ref{sec:reciprocal}. Equivalently, in free log-coordinates~$y$,
\begin{equation}
\label{eq:graph-functional-log}
C_{\mathcal{M},\Psi}(y)
  \;=\; \sum_{s} w_s\,\Psi(y_s)
    \;+\; \sum_{\{s,s'\}} w_{ss'}\,\Psi(y_s-y_{s'}),
\end{equation}
with pinned coordinates substituted as boundary values.
\end{definition}

The edge term is well defined because $\Psi$ is even. Site terms penalize
absolute log-deviation from the reference; edge terms penalize relative
log-deviation between adjacent sites. Throughout, $C_{\mathcal{M},\Psi}$ is a
dimensionless bookkeeping functional rather than a thermodynamic free energy
(Section~\ref{sec:outlook}).

\medskip
\noindent\textbf{Pinned-site constants and the reduced objective.}
For a boundary assignment~$b$ on~$X_\partial$, write $m_b(y)$ for the full
multiplier field that equals $e^{y_s}$ on free sites and $b(s)$ on pinned sites.
The sum in~\eqref{eq:graph-functional} includes pinned sites, which contribute
constants. The boundary-reduced objective on free log-coordinates is
\[
C^{\mathrm{free}}_{\mathcal{M},\Psi,b}[y]
  :=C_{\mathcal{M},\Psi}[m_b(y)]
    -\sum_{s\in X_\partial}w_s\,\Psi\bigl(\log b(s)\bigr).
\]
At fixed~$b$ we abbreviate $\widetilde C:=C^{\mathrm{free}}_{\mathcal{M},\Psi,b}$.
Pinned constants cancel in free-coordinate gradients and in cost differences at
fixed boundary, so they affect no minimizer. Locality, pairwise additivity, and
the absence of higher-order interactions are modeling assumptions. The termwise
propagation of the ratio-composition-law (RCL) characterization of~$J$ under a pairwise homogeneous
ansatz is recorded as Corollary~\ref{thm:graph-propagation} in
Section~\ref{sec:reciprocal}.

\begin{proposition}[Static minimization]
\label{prop:P3}
Let $\Psi$ be in class~\eqref{eq:Psi-class}, let $\mathcal{M}$ have weights
$w_s,w_{ss'}>0$, and let $(X_\partial,b)$ be a boundary condition with nonempty
free set. Then:
\begin{enumerate}
  \item $C_{\mathcal{M},\Psi}[m]\ge0$ for every field~$m$;
  \item over unrestricted fields, the uniform field is the unique absolute
    minimizer and has $C_{\mathcal{M},\Psi}=0$;
  \item for every fixed boundary assignment there is a unique constrained
    minimizer on the feasible set;
  \item if at least one pinned value differs from one, the constrained minimum
    of $C_{\mathcal{M},\Psi}$ is strictly positive.
\end{enumerate}
\end{proposition}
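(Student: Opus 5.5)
The plan is to work entirely in free log-coordinates, using the form \eqref{eq:graph-functional-log}, and to derive everything from two facts about the class \eqref{eq:Psi-class}. First, $\Psi\ge0$ with equality only at $0$: since $g$ is odd and strictly increasing with $g(0)=0$, $\Psi$ is strictly decreasing on $(-\infty,0]$ and strictly increasing on $[0,\infty)$. Second, $\Psi$ is strictly convex and coercive: $\Psi''>0$ gives $g(t)\ge g(1)>0$ for $t\ge1$, so $\Psi(t)\ge \Psi(1)+g(1)(|t|-1)\to\infty$ as $|t|\to\infty$, using evenness for negative $t$.

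Items (i) and (ii) then follow termwise. Every term in \eqref{eq:graph-functional-log} is a positive weight times a nonnegative value of $\Psi$, which gives (i). The uniform field $y\equiv0$ makes every term vanish, so $C=0$ there. Conversely, $C=0$ forces $w_s\Psi(y_s)=0$ for every site, and since $w_s>0$ this gives $y_s=0$ for all $s$. Thus the uniform field is the unique zero and hence the unique absolute minimizer. Note that the site terms alone already pin every coordinate, so no connectivity assumption on the graph is needed.

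For (iii), fix $b$ and consider $\widetilde C$ on $\mathbb{R}^{F}$, where $F=X\setminus X_\partial\neq\emptyset$.
\begin{itemize}
\item \emph{Strict convexity.} The site terms $y\mapsto w_s\Psi(y_s)$, for $s\in F$, sum to a strictly convex function of $y$. This holds because the Hessian of their sum is $\mathrm{diag}(w_s\Psi''(y_s))$, which is positive definite. Edge terms are convex functions of affine maps $y_s-y_{s'}$ or $y_s-\log b(s')$. Pinned–pinned edges are constants. Hence $\widetilde C$ is strictly convex.
\item \emph{Coercivity.} $\widetilde C(y)\ge \sum_{s\in F} w_s\Psi(y_s)$, up to the nonnegative edge terms. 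Since $\Psi$ is coercive, $\widetilde C(y)\to\infty$ as $|y|\to\infty$.
\item \emph{Conclusion.} Continuity together with coercivity gives existence of a minimizer, since sublevel sets are compact. Strict convexity gives uniqueness.
\end{itemize}
Subtracting the pinned constants does not change the minimizer, so the unique constrained minimizer of $C_{\mathcal{M},\Psi}$ on the feasible set is the same one.

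For (iv), suppose some pinned $s_0$ has $b(s_0)\ne1$. Every feasible field then contains the site term $w_{s_0}\Psi(\log b(s_0))>0$, and all other terms are nonnegative. The constrained minimum, attained by (iii), is therefore at least $w_{s_0}\Psi(\log b(s_0))>0$. This bound is uniform over the feasible set, so positivity of the attained minimum is immediate.

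The only step needing care is coercivity in (iii). It relies on the lower bound of $\Psi$ from $\Psi''>0$ and on the presence of a positive site weight on every free vertex. I would state that bound explicitly rather than appeal to a general convexity principle, because strict convexity alone does not give existence: a strictly convex function such as $e^t$ need not attain its infimum.
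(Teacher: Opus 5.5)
Your proposal is correct and follows essentially the same route as the paper. Items (i), (ii) and (iv) come from termwise nonnegativity of $\Psi$ (vanishing only at $0$) with positive weights, and item (iii) from strict convexity via the positive-definite diagonal site Hessian plus convex edge compositions, with coercivity from a tangent-line bound $\Psi(t)\ge g(t_0)|t|-\mathrm{const}$ after discarding the edge terms (the paper keeps $t_0>0$ general where you take $t_0=1$).
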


\begin{proof}
Every summand is a nonnegative multiple of~$\Psi$ (even, vanishing only at~$0$)
composed with a log ratio, which gives (i), (ii), and~(iv). For~(iii), the
free-site Hessian contributions are $\operatorname{diag}(w_s\Psi''(y_s))\succ0$
and the edge terms are convex compositions with affine maps, so
$\widetilde C$ is strictly convex on every affine feasible set.

Coercivity holds for every generator in class~\eqref{eq:Psi-class}, not only
for the reciprocal specialization. Fix any $t_0>0$ and set $c:=g(t_0)>0$, which
is positive because $g$ is strictly increasing with $g(0)=0$. Convexity
of~$\Psi$ gives $\Psi(t)\ge\Psi(t_0)+c\,(t-t_0)\ge c\,t-c\,t_0$ for $t\ge t_0$,
and evenness extends this to
\[
\Psi(t)\ \ge\ c\,|t|-c\,t_0\qquad(t\in\mathbb{R}),
\]
since for $|t|\le t_0$ the right-hand side is nonpositive while $\Psi\ge0$. So
$g$ keeps a sign bounded away from zero outside any neighborhood of the origin
and $\Psi$ grows at least linearly. Discarding the nonnegative edge terms and
writing $w_{\min}:=\min_s w_s>0$ over the free sites,
\[
\widetilde C(y)\ \ge\ w_{\min}\,c\sum_{s\ \mathrm{free}}|y_s|
  \;-\;w_{\min}\,c\,t_0\,\bigl|\{s\ \mathrm{free}\}\bigr|
\ \longrightarrow\ \infty
\qquad\text{as }\|y\|\to\infty,
\]
so sublevel sets are bounded, a minimizer exists, and strict convexity makes it
unique. Supplemental~S2 records the same argument in the reciprocal
specialization.
\end{proof}

The constrained minimizer is the observable for the inverse problem:
Section~\ref{sec:ident} recovers the generator from the response map, and
Section~\ref{sec:numerics} quantifies finite-grid discrimination. Equilibria are
realized by the associated convex gradient flow; details are in Supplemental~S2--S3.

\section{Uniqueness and reconstruction from equilibrium maps}
\label{sec:ident}

We work throughout this section with a generator $\Psi$ in the class
\eqref{eq:Psi-class}, writing $g=\Psi'$. The two-site response operator
\eqref{eq:response-op} is well defined by strict convexity.
Figure~\ref{fig:reconstruction-query} summarizes the basic reconstruction query.

\begin{figure}[htbp]
\centering
\begin{tikzpicture}[
  free/.style={circle,draw=black,fill=white,minimum size=11mm,inner sep=1pt},
  pinned/.style={rectangle,draw=black,fill=gray!18,minimum size=10mm,inner sep=2pt},
  every node/.style={font=\footnotesize}
]
  \node[pinned] (B) at (0,0) {$B:u=s+w$};
  \node[free] (I) at (4.2,0) {$I:v^\star=w$};
  \draw[-{Latex}] (B) -- node[above] {$r=\dfrac{g(s)}{g(w)}$} (I);
  \node[align=center,anchor=north,font=\footnotesize,
        inner sep=0pt,outer sep=0pt] at ($(B.south)!0.5!(I.south)+(0,-3.5mm)$)
    {site weight $\alpha$, edge weight $\beta$, $r=\alpha/\beta$\\[2.5ex]
     tune $r$ until $v^\star=w$; attained $r=g(s)/g(w)$.};
\end{tikzpicture}
\caption{Reconstruction query for Theorem~\ref{thm:response-ident}. With boundary
$u=s+w$ and target equilibrium~$w$, the weight ratio that realizes
$v^\star=w$ equals $g(s)/g(w)$.}
\label{fig:reconstruction-query}
\end{figure}

\subsection{Main reconstruction theorem}
\label{sec:main-recon}

\begin{theorem}[Uniqueness and reconstruction from the equilibrium response map]
\label{thm:response-ident}
\label{prop:global-ident}
Consider a free site~$I$ (log-coordinate~$v$, weight~$\alpha>0$) joined by a
single edge (weight~$\beta>0$) to a neighbor pinned at~$u>0$, and set
$r:=\alpha/\beta$. For $\Psi$ in class~\eqref{eq:Psi-class}, the objective
$r\Psi(v)+\Psi(u-v)$ is strictly convex, with unique minimizer
$v^\star=\mathcal{R}_{\Psi}(u,r)\in(0,u)$ characterized by
\begin{equation}
\label{eq:gadget-stat}
r\,g(v^\star)=g(u-v^\star),
\end{equation}
equivalently the inversion formula
\begin{equation}
\label{eq:inversion}
r=\frac{g(u-v^\star)}{g(v^\star)}.
\end{equation}
As $(u,r)$ ranges over $(0,\infty)^2$, the complete response map
$\mathcal{R}_{\Psi}$ determines $g(s)/g(w)$ for every $s,w>0$. Without a
scale calibration this determines~$g$ only up to a positive multiplicative
constant. Within the calibrated class $g'(0)=1$, it determines~$g$ uniquely,
and hence determines~$\Psi$ via $\Psi(t)=\int_0^t g$. In particular, no
calibrated generator distinct from a given~$\Psi$ can reproduce
$\mathcal{R}_{\Psi}$ on all $(u,r)$. The statement is structural and noiseless:
it presumes exact access to the complete map. Finite-data discrimination is
treated parametrically in Section~\ref{sec:numerics}.
\end{theorem}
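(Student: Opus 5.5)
The plan has four steps: establish the forward facts for fixed $(u,r)$; show the response map realizes every pair $(s,w)$ as a minimizer configuration; read off all ratios $g(s)/g(w)$; and fix the scale with $g'(0)=1$.

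\emph{Forward problem.} Let $\phi(v):=r\Psi(v)+\Psi(u-v)$. Then
\[
\phi''(v)=r\Psi''(v)+\Psi''(u-v)>0,
\]
so $\phi$ is strictly convex. It is coercive by the linear lower bound on $\Psi$ established in the proof of Proposition~\ref{prop:P3}. Hence there is a unique minimizer, characterized by $\phi'(v^\star)=0$, which is \eqref{eq:gadget-stat}.

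To locate the minimizer, use that $g$ is odd and strictly increasing with $g(0)=0$. Since $u>0$, we get
\[
\phi'(0)=-g(u)<0,\qquad \phi'(u)=r\,g(u)>0,
\]
so $v^\star\in(0,u)$. Then $g(v^\star)>0$, and dividing \eqref{eq:gadget-stat} by it gives \eqref{eq:inversion}.

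\emph{Realizing every pair $(s,w)$.} This is the step that needs an explicit construction. Given $s,w>0$, set $u:=s+w$ and $r:=g(s)/g(w)>0$. The point $v=w$ then satisfies the stationarity equation \eqref{eq:gadget-stat}:
\[
r\,g(w)=g(s)=g(u-w).
\]
By uniqueness of the minimizer, $\mathcal{R}_\Psi(u,r)=w$.

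The point is that the experimenter does not know $g$, so cannot compute this $r$ in advance. The argument therefore has to be phrased intrinsically in terms of the map. For the fixed value $u=s+w$, the map determines the set $\{r>0:\mathcal{R}_\Psi(s+w,r)=w\}$. By the computation above this set is nonempty, and by \eqref{eq:inversion} every element of it equals $g(s)/g(w)$. So the set is exactly the singleton $\{g(s)/g(w)\}$. As a side remark, $r\mapsto\mathcal{R}_\Psi(u,r)$ is strictly decreasing and hence injective, which one sees by implicit differentiation of \eqref{eq:gadget-stat}; this is not needed for the argument. The map therefore determines $Q(s,w):=g(s)/g(w)$ for all $s,w>0$.

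\emph{Recovering $g$ up to scale.} Fix the reference point $w=1$. Then
\[
g(s)=g(1)\,Q(s,1)\quad\text{for } s>0.
\]
Oddness and $g(0)=0$ extend this to all of $\mathbb{R}$, so $g=g(1)\,h$ with $h$ determined by the map. Conversely, replacing $g$ by $cg$ with $c>0$ leaves \eqref{eq:gadget-stat}, and hence $\mathcal{R}_\Psi$, unchanged. So the scale is genuinely not identifiable.

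\emph{Fixing the scale with $g'(0)=1$.} The calibration gives $1=g(1)\,h'(0)$. Here $h$ is $C^1$ because $g$ is, and $h'(0)=g'(0)/g(1)>0$. We can also write
\[
h'(0)=\lim_{s\downarrow0}\frac{Q(s,1)}{s},
\]
which is determined by the data. So $g(1)=1/h'(0)$ is fixed, and
\[
g(s)=\lim_{\sigma\downarrow 0}\frac{\sigma\,Q(s,1)}{Q(\sigma,1)}=\lim_{\sigma\downarrow0}\sigma\,Q(s,\sigma),
\]
an explicit formula in terms of the response map alone. Then $\Psi(t)=\int_0^t g$ is determined as well.

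Uniqueness among calibrated generators follows immediately. If two calibrated generators have the same response map, they have the same $Q$ and hence the same $g$. The only delicate point I expect is the limit computation, which uses $g'(0)=\Psi''(0)>0$ so that the denominator ratio has a finite nonzero limit.
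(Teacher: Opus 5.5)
Your proof is correct and has the same overall structure as the paper's: stationarity and the inversion formula, the choice $u=s+w$, and calibration through the slope of the ratio function at the origin. Where you differ is the realizability step.

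\textbf{Realizability step.} The paper differentiates the stationarity equation implicitly to show that $r\mapsto\mathcal{R}_\Psi(u,r)$ is strictly decreasing. It then asserts the endpoint limits ($v^\star\to u$ as $r\to0^+$, $v^\star\to0$ as $r\to\infty$) and concludes that $v^\star$ sweeps $(0,u)$ bijectively, so a unique ratio places the minimizer at each target. You instead exhibit the ratio $g(s)/g(w)$ directly and check, via uniqueness of the critical point, that it places the minimizer at $w$. You then use the inversion formula to show that $\{r>0:\mathcal{R}_\Psi(s+w,r)=w\}$ is exactly this singleton. This makes monotonicity unnecessary, as you note, and you never need the endpoint limits.

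\textbf{What each route buys.} Your route is shorter and self-contained. The paper's route yields the monotonicity and the formula for $dv^\star/dr$, which are reused later. The proof of Proposition~\ref{prop:bisection} explicitly cites the strict decrease of $r\mapsto\mathcal{R}_\Psi(u,r)$ from this proof, and Corollary~\ref{cor:stability} rests on the same derivative formula. The paper's route also matches the tune-to-target protocol, in which the uniqueness of the tuned ratio matters operationally.

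\textbf{Closing formula.} Your formula $g(s)=\lim_{\sigma\downarrow0}\sigma\,Q(s,\sigma)$ is an equivalent, reference-free form of the paper's $g=h/h'(0^+)$ with a fixed reference $w_0$.
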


\begin{proof}
The hypothesis $\Psi''(t)>0$ implies strict convexity and $g'(t)>0$, so a
unique minimizer solves~\eqref{eq:gadget-stat}. Fix $u>0$. Since $g$ is odd and
strictly increasing with $g(0)=0$, the objective derivative
$r g(v)-g(u-v)$ is strictly negative on $v\le0$ and strictly positive on
$v\ge u$, so $v^\star\in(0,u)$. Write $G(v,r)=r\,g(v)-g(u-v)$. Then
$\partial_v G=r\,g'(v)+g'(u-v)>0$ and $\partial_r G=g(v)$, so the implicit
function theorem gives
\[
\frac{dv^\star}{dr}
  =-\frac{g(v^\star)}{r\,g'(v^\star)+g'(u-v^\star)}<0
\]
on~$(0,u)$. Thus $v^\star$ decreases strictly in~$r$, with endpoint limits
$v^\star\to u$ as $r\to0^+$ and $v^\star\to0$ as $r\to\infty$. Hence $v^\star$
sweeps~$(0,u)$ bijectively. For any target $w\in(0,u)$ a unique ratio places
the minimizer at $v^\star=w$, and~\eqref{eq:inversion} reads
$r=g(u-w)/g(w)$. Given arbitrary $s,w>0$, choose $u=s+w$: the ratio that places
the minimizer at~$w$ is $g(s)/g(w)$. The response map therefore yields every
positive ratio of~$g$. Fix $w_0>0$ and set $h(s):=g(s)/g(w_0)$ for $s>0$, which
the response map supplies on the open half-line only; extend it to the endpoint
by the convention
\[
h(0):=0=\lim_{s\to0^+}\frac{g(s)}{g(w_0)},
\]
forced by continuity of~$g$ and $g(0)=0$. Then $h$ is known and $C^1$
on~$(0,\infty)$ and continuous at~$0$, and without calibration it
determines~$g$ only up to the unknown factor~$g(w_0)$. Read the slope
at the endpoint as the right derivative
\[
h'(0^+):=\lim_{s\to0^+}\frac{h(s)-h(0)}{s}
        =\lim_{s\to0^+}\frac{g(s)}{s\,g(w_0)}
        =\frac{g'(0)}{g(w_0)},
\]
which is exactly the near-origin information carried by the positive-side
response map. Under the calibration $g'(0)=1$ this gives $h'(0^+)=1/g(w_0)$,
hence $g(s)=h(s)/h'(0^+)$ for $s>0$. The odd extension fixes~$g$ on
$(-\infty,0)$, and $\Psi(t)=\int_0^t g$ recovers the generator.
\end{proof}

Equation~\eqref{eq:inversion} is not merely an injectivity certificate.
The reconstruction mechanism is unusually direct: boundary control sets
the two arguments at which~$g$ is compared, while the weight ratio that
places the observed equilibrium at a prescribed location reports their
derivative ratio. The equilibrium location is not merely evidence that
two candidate generators differ; across the complete response map it
provides the information needed to reconstruct the generator itself,
apart from the unavoidable global scale.

\subsection{Identification at a single fixed weight ratio}
\label{sec:fixed-ratio}
Theorem~\ref{thm:response-ident} varies both controls. A boundary sweep at a
single fixed ratio already suffices, and the ratio need not be known. With one
ratio, each equilibrium links~$g$ at two points, and these links form chains
that accumulate at the origin; the regularity of~$g$ there ties the chains
together.

\begin{theorem}[Identification from one boundary sweep at a fixed, possibly unknown, ratio]
\label{thm:fixed-ratio}
Let $\Psi$ be in class~\eqref{eq:Psi-class}, fix $r>0$, and suppose the
boundary response $V(u):=\mathcal{R}_{\Psi}(u,r)$ is known for
$u\in(0,\bar u]$, where $0<\bar u\le\infty$ (read $(0,\infty)$ when $\bar u=\infty$). The
ratio~$r$ is constant over the sweep but need not be known.
\begin{enumerate}
  \item[(i)] The ratio is identified:
    $r=\lim_{u\to0^+}u/V(u)-1$.
  \item[(ii)] If $r=1$, then $V(u)=u/2$ for every generator, and the curve
    carries no information about~$\Psi$.
  \item[(iii)] Let $r\neq1$ and $\varrho:=\max\{r,1/r\}>1$. Write $\ell(u)$
    for the larger and $\tau(u)$ for the smaller of $V(u)$ and $u-V(u)$; thus
    $\ell(u)=u-V(u)$ if $r>1$ and $\ell(u)=V(u)$ if $r<1$, and which case holds
    is read from~$V$, since $V(u)<u/2$ exactly when $r>1$. Then $\ell$ is a
    continuous, strictly increasing bijection of $(0,\bar u]$ onto
    $(0,\bar s]$, where $\bar s:=\ell(\bar u)$ ($\bar s=\infty$ if
    $\bar u=\infty$), and $T:=\tau\circ\ell^{-1}$ is continuous and strictly
    increasing, with
    \[
    0<T(s)<s,\qquad g\bigl(T(s)\bigr)=\varrho^{-1}g(s)\qquad(0<s\le\bar s).
    \]
\end{enumerate}
In case~(iii), for every $s\in(0,\bar s]$, with $T^{\circ n}$ the $n$-fold
composite,
\begin{equation}
\label{eq:koenigs}
g(s)=g'(0)\lim_{n\to\infty}\varrho^{n}\,T^{\circ n}(s).
\end{equation}
Consequently, for $r\neq1$ the curve~$V$ on $(0,\bar u]$ determines~$r$ and
determines~$g$ on $(0,\bar s]$ up to the positive factor~$g'(0)$, uniquely in the
calibrated class $g'(0)=1$. When $\bar u=\infty$ it determines~$\Psi$ up to a
positive scale, uniquely after calibration.
\end{theorem}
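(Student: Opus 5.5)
The plan is to work entirely from the stationarity equation of Theorem~\ref{thm:response-ident}, $r\,g(V(u))=g(u-V(u))$ with $V(u)\in(0,u)$, and its implicit-function regularity. The first step is~(i). Divide by $V(u)$ and write $g(t)=g'(0)t+o(t)$ as $t\to0$; both arguments lie in $(0,u)$, and $g'(0)>0$ by class~\eqref{eq:Psi-class}. This gives $r\,V(u)\sim u-V(u)$, hence $u/V(u)-1\to r$. The nondegeneracy $g'(0)\neq0$ is exactly what this step uses.

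Part~(ii) is immediate. With $r=1$, $g(V)=g(u-V)$ and strict monotonicity force $V=u-V$.

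For~(iii), assume $r>1$; the case $r<1$ follows by swapping the roles of $V$ and $u-V$. Then $g(u-V)=r\,g(V)>g(V)$, so $u-V>V$, which justifies reading the case off from $V<u/2$. Hence $\ell=u-V$, $\tau=V$, and $g(\tau(u))=\varrho^{-1}g(\ell(u))$.

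Monotonicity comes next. Implicit differentiation in~$u$ gives
\[
V'(u)=\frac{g'(u-V)}{r g'(V)+g'(u-V)}\in(0,1).
\]
So both $V$ and $u-V$ are continuous and strictly increasing, and both tend to~$0$ as $u\to0^+$. If $\bar u=\infty$, then $\ell(u)\ge u/2\to\infty$. Therefore $\ell$ is a bijection $(0,\bar u]\to(0,\bar s]$ with continuous increasing inverse, and $T=\tau\circ\ell^{-1}$ is continuous and strictly increasing. Substituting $u=\ell^{-1}(s)$ yields $g(T(s))=\varrho^{-1}g(s)<g(s)$, so $0<T(s)<s$ by monotonicity of~$g$ and $g(T(s))>0$.

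For~\eqref{eq:koenigs}, iterate the functional equation to get $g(T^{\circ n}(s))=\varrho^{-n}g(s)$. Since $T$ maps $(0,\bar s]$ into itself with $T(s)<s$, the iterates $T^{\circ n}(s)$ decrease to a limit $L\ge0$. Suppose $L>0$. Then continuity of~$g$ forces $g(L)=\lim\varrho^{-n}g(s)=0$, contradicting $g(L)>0$; so $L=0$. Alternatively, this follows directly from $g(T^{\circ n}s)\to0$ and $g$ being strictly increasing with $g(0)=0$. Now write
\[
\varrho^nT^{\circ n}(s)=\frac{g(s)\,T^{\circ n}(s)}{g(T^{\circ n}(s))},
\]
and use $g(t)/t\to g'(0)$ as $t\to0^+$ to obtain $\varrho^nT^{\circ n}(s)\to g(s)/g'(0)$, which is~\eqref{eq:koenigs}.

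Identification then follows. $V$ determines~$r$ by~(i), hence $\varrho$. It also determines which case holds, and so determines $\ell$, $\tau$ and~$T$. The formula then gives $g$ on $(0,\bar s]$ up to the factor $g'(0)$, with oddness extending to negative arguments; when $\bar s=\infty$, integration recovers~$\Psi$.

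The main obstacle is modest: carefully justifying that $T^{\circ n}(s)\to0$, and that the Koenigs limit uses only first-order differentiability at~$0$ rather than the classical $C^{1,\epsilon}$ hypotheses. This is why I avoid proving existence of the Koenigs limit for an abstract $T$ and instead read it off from the known solution~$g$, so the convergence is automatic.
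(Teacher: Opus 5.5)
Your proof is correct and has the same overall structure as the paper's. Part (i) uses $g(t)/t\to g'(0)>0$, part (ii) uses strict monotonicity of $g$, and the Koenigs limit is read off from the identity $\varrho^nT^{\circ n}(s)=g(s)\,T^{\circ n}(s)/g\bigl(T^{\circ n}(s)\bigr)$ for the known solution $g$, not from abstract Koenigs theory.

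The genuine difference is how you show that $\ell$ is a bijection in (iii). You differentiate the stationarity equation to get $0<V'(u)<1$, so $V$ and $u-V$ are both $C^1$ and strictly increasing. You then define $T=\tau\circ\ell^{-1}$ and obtain $g\circ T=\varrho^{-1}g$ by substitution.

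The paper instead introduces the intrinsic map $T_\varrho(s):=g^{-1}\bigl(g(s)/\varrho\bigr)$ on $(0,\infty)$ and rewrites stationarity as $u=\ell(u)+T_\varrho(\ell(u))$. It then gets $\ell$ as the inverse of the continuous increasing map $s\mapsto s+T_\varrho(s)$, which is at least $s$ and tends to $0$ at $0^+$. This yields $T=T_\varrho$ directly.

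What each route buys:
\begin{itemize}
\item The paper's step is purely order-theoretic: it uses only continuity and strict monotonicity of $g$. It also identifies $T$ as the restriction of one map built from $g$ and $\varrho$, visibly independent of the sweep range $\bar u$, so the functional equation and $0<T(s)<s$ are immediate.
\item Your step spends the $C^2$ hypothesis through the implicit-function theorem. In return it gives $\ell\in C^1$ with $\ell'>0$, so $\ell^{-1}$ is locally Lipschitz. That is the kind of control on $\ell^{-1}$ that the open stability question for the fixed-ratio reconstruction would need.
\end{itemize}

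Your argument that $T^{\circ n}(s)\to0$, via $g\bigl(T^{\circ n}(s)\bigr)=\varrho^{-n}g(s)\to0$, is a harmless variant of the paper's fixed-point contradiction. It does not need continuity of $T$.
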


\begin{proof}
By Theorem~\ref{thm:response-ident}, $V(u)\in(0,u)$ and
$r\,g(V(u))=g(u-V(u))$. For~(i), both $V(u)$ and $u-V(u)$ lie in $(0,u)$, and
$g(t)/t\to g'(0)>0$ as $t\to0^+$, so
\[
\frac{u-V(u)}{V(u)}
=r\,\frac{g(V(u))/V(u)}{g(u-V(u))/(u-V(u))}\longrightarrow r
\qquad(u\to0^+).
\]
Statement~(ii) is the equal-weight symmetry of
Section~\ref{sec:symmetric-design}. For~(iii), let
$T_\varrho(s):=g^{-1}\bigl(g(s)/\varrho\bigr)$ for $s>0$, which is well
defined because $g(s)/\varrho\in(0,g(s))$ lies in the range of~$g$. The map
$T_\varrho$ is continuous and strictly increasing, with $0<T_\varrho(s)<s$.
If $r>1$, then $g(u-V)=r\,g(V)>g(V)$, so $V(u)<u/2$; if $r<1$, then
$V(u)>u/2$. In both cases the stationarity relation reads
$\tau(u)=T_\varrho(\ell(u))$, so $u=\ell(u)+T_\varrho(\ell(u))$. The map
$s\mapsto s+T_\varrho(s)$ is continuous and strictly increasing, tends to~$0$
as $s\to0^+$, and is at least~$s$. Hence $\ell$ is its inverse
on~$(0,\bar u]$, a continuous increasing bijection onto $(0,\bar s]$, and
$T=\tau\circ\ell^{-1}=T_\varrho$ on $(0,\bar s]$. The orbit $T^{\circ n}(s)$
decreases and stays positive, so it converges to some $t_\infty\ge0$; if
$t_\infty>0$, continuity would give $T(t_\infty)=t_\infty$, contradicting
$T(t_\infty)<t_\infty$. Thus $T^{\circ n}(s)\to0$. Induction gives $g(T^{\circ n}(s))=\varrho^{-n}g(s)$,
and therefore
\[
\varrho^{n}\,T^{\circ n}(s)
=g(s)\,\frac{T^{\circ n}(s)}{g\bigl(T^{\circ n}(s)\bigr)}
\longrightarrow\frac{g(s)}{g'(0)},
\]
which is~\eqref{eq:koenigs}. Because $\ell$ and~$T$ are computed from~$V$
alone, two generator--ratio pairs with the same curve have the same ratio,
by~(i), and the same values of $g/g'(0)$ on~$(0,\bar s]$. With $\bar u=\infty$ the odd
extension and $\Psi(t)=\int_0^t g$ complete the proof.
\end{proof}

The relation $g\circ T=\varrho^{-1}g$ is Schr\"oder's functional
equation~\cite{Schroeder1870} for the map~$T$. Extended by $T(0)=0$, $T$ is
$C^1$ on $[0,\bar s]$ with $T'(0)=1/\varrho$, because
$T'(s)=g'(s)/\bigl(\varrho\,g'(T(s))\bigr)\to1/\varrho$ as $s\to0^+$; so~$0$
is an attracting fixed point with multiplier~$1/\varrho$,
and~\eqref{eq:koenigs} is Koenigs' linearization
limit~\cite{Koenigs1884,KuczmaChoczewskiGer1990}. The classical theory gives
conditions under which such solutions exist and are unique. Here a solution,
$g$ itself, is given, and the exact identity in the proof forces the displayed
limit; convergence is not inferred from $C^1$ regularity of an arbitrary map.
Prior sharing-rule work constructs compatible utilities from a response curve
at a prescribed weight~\cite{PelusoTrannoy2012}. To our knowledge, the new
point here is the joint identification: within class~\eqref{eq:Psi-class}, one
unequal-ratio sweep determines both the initially unknown ratio and the
generator up to scale. The novelty claim does not concern the iteration
machinery.

The value of~$\varrho$ is not needed for the reconstruction up to scale:
since $g(T^{\circ n}(s))/g(T^{\circ n}(s_0))=g(s)/g(s_0)$ for every~$n$,
\[
\frac{g(s)}{g(s_0)}=\lim_{n\to\infty}\frac{T^{\circ n}(s)}{T^{\circ n}(s_0)}
\qquad(s,s_0\in(0,\bar s]),
\]
and $r$ then follows from $r=g(u-V(u))/g(V(u))$ at any~$u$.

Theorem~\ref{thm:fixed-ratio} trades the ability to vary the weight ratio for
an iterative reconstruction that probes progressively smaller arguments.
Although the exact reconstruction formula is convergent, its numerical
conditioning under perturbed response curves remains unresolved. In
particular, the factor~$\varrho^{n}$ in~\eqref{eq:koenigs} amplifies
perturbations in the terminal iterate, errors in the response-derived map~$T$
can accumulate throughout the iteration, and stopping the iteration early
leaves a linearization bias. The theorem establishes exact-data
identification, not a uniform stability estimate or a noise-robust
reconstruction algorithm; Supplemental~S12 checks it on noise-free responses.
Theorem~\ref{thm:response-ident},
by contrast, gives each ratio of~$g$ from a single equilibrium without a
limit. This pointwise form is what Corollaries~\ref{cor:finite-query}
and~\ref{cor:stability} and Proposition~\ref{prop:bisection} make
quantitative.

\subsection{Finite-target recovery and stability}
\label{sec:finite-target}

\begin{corollary}[Finite-target ratio reconstruction]
\label{cor:finite-query}
Fix a reference $w_0>0$ and targets $s_1,\ldots,s_n>0$, and set $u_i=s_i+w_0$.
Under an inverse-response (tune-to-target) oracle that can adjust $r_i$
until $\mathcal{R}_{\Psi}(u_i,r_i)=w_0$ exactly, the attained control equals
\[
r_i=\frac{g(s_i)}{g(w_0)},
\]
so $n$ targets determine~$g$ on the selected grid relative to the common
scale~$g(w_0)$. A fixed finite target list determines these ratios but not
the generator between target points. Absolute scale additionally requires the near-origin slope
$h'(0^+)=1/g(w_0)$ of Theorem~\ref{thm:response-ident}, so unit-curvature
calibration is available only if the protocol provides a source for it: either
a sequence of targets $s_k\to0^+$, or an equivalent one-sided slope
estimator for $s\mapsto g(s)/g(w_0)$ at the origin. Absent such a declaration
the recovered object is the ratio function~$h$, not~$g$ itself.

Under ordinary forward observation one instead evaluates
$r\mapsto\mathcal{R}_{\Psi}(u_i,r)$ (or solves the scalar stationarity equation)
and must locate $r_i$ by root-finding. In that model a single exact query per
target is not assumed: if a search returns $\hat r_i$ with residual
$\varepsilon_v:=|\mathcal{R}_{\Psi}(u_i,\hat r_i)-w_0|$ and $\hat r_i$ lies in
the neighborhood~$U$ of Corollary~\ref{cor:stability}, then
\[
\bigl|\log(\hat r_i/r_i)\bigr|
\le\frac{\varepsilon_v}{c_0\inf_U r}
\]
with the local constants of that corollary. A practical stopping rule is to
terminate when $\varepsilon_v\le\varepsilon$ for a prescribed tolerance
$\varepsilon>0$, which yields an explicit log-ratio accuracy bound.
\end{corollary}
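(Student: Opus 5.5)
The proof has two parts: an exact-oracle identity that is a direct specialization of Theorem~\ref{thm:response-ident}, and a forward-model residual bound that I would derive from the local Lipschitz estimate of Corollary~\ref{cor:stability}, taken as given.

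\textbf{Exact oracle.} Apply the stationarity relation~\eqref{eq:gadget-stat} with $u_i=s_i+w_0$ and $v^\star=w_0$; by Theorem~\ref{thm:response-ident} the minimizer lies in $(0,u_i)$, so $w_0$ is an admissible target. The monotonicity of $r\mapsto\mathcal{R}_{\Psi}(u_i,r)$ shown in that proof gives existence and uniqueness of the ratio placing the minimizer at $w_0$. The inversion formula~\eqref{eq:inversion} then reads $r_i=g(u_i-w_0)/g(w_0)=g(s_i)/g(w_0)$. Hence the list $(r_i)$ equals $h(s_i)$ with $h=g/g(w_0)$, which determines $g$ on the grid up to the single factor $g(w_0)$.

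\textbf{Negative claims.} To show that values off the grid are not determined, perturb $g$ by an odd $C^1$ bump supported away from $\{s_i\}\cup\{w_0\}$ and from the origin, small enough to keep $g'>0$. This preserves every $r_i$ but changes $g$ between the targets. Scaling by $c>0$ preserves all ratios, so calibration needs independent access to $h'(0^+)=g'(0)/g(w_0)$, exactly as in the proof of Theorem~\ref{thm:response-ident}. Either a target sequence $s_k\to0^+$ supplies this through $h(s_k)/s_k\to h'(0^+)$, or a declared slope estimator does.

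\textbf{Forward-model bound.} Here the plan is to view $\rho(r):=\mathcal{R}_{\Psi}(u_i,r)$ in log-ratio coordinates. By the implicit-function formula in the proof of Theorem~\ref{thm:response-ident},
\[
\frac{d\rho}{d\log r}=-\frac{r\,g(\rho)}{r\,g'(\rho)+g'(u_i-\rho)}.
\]
I expect Corollary~\ref{cor:stability} to supply a neighbourhood $U$ of $r_i$ on which $|d\rho/dr|\ge c_0$. On $U$ this gives $|d\rho/d\log r|=r\,|d\rho/dr|\ge c_0\inf_U r$. Since $\hat r_i$ and $r_i$ both lie in $U$, which I take to be an interval so that the segment between them stays in $U$, the mean value theorem in $\log r$ yields
\[
\varepsilon_v=|\rho(\hat r_i)-\rho(r_i)|\ge c_0\inf_U r\,\bigl|\log(\hat r_i/r_i)\bigr|,
\]
and rearranging gives the stated bound. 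The stopping rule follows immediately by substituting $\varepsilon_v\le\varepsilon$.

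The main obstacle is matching the constants with Corollary~\ref{cor:stability}. If that corollary instead bounds the derivative in $\log r$ directly, the factor $\inf_U r$ is absorbed and the bound only becomes sharper. The argument also relies on $U$ being convex in $r$, which should be stated when invoking the corollary.
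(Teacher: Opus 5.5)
Your proposal is correct and follows the paper's route. The exact-oracle identity is the inversion formula~\eqref{eq:inversion} of Theorem~\ref{thm:response-ident} with $u_i=s_i+w_0$. The residual bound is the logarithmic consequence stated in Corollary~\ref{cor:stability}: its proof first gets $c_0|\hat r-r_*|\le\varepsilon_v$ from the mean-value theorem in $r$, then uses $|\log x-\log y|\le|x-y|/\inf_U r$ on $U$, which yields the same constant as your mean-value step in $\log r$. Your odd-bump perturbation, supported away from the grid, from $w_0$ and from the origin so that $g'>0$ and $g'(0)$ are preserved, explicitly justifies the between-target non-identifiability that the paper only asserts.
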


Exact reconstruction assumes exact control and equilibrium localization.
In any finite-precision setting, the relevant question is therefore how
errors in the observed target location propagate into the recovered ratio
$g(s)/g(w)$. Corollary~\ref{cor:stability} gives a local answer: away from
the origin and for bounded curvature, this inversion is Lipschitz stable.

\begin{corollary}[Local Lipschitz stability of ratio recovery]
\label{cor:stability}
Let $g=\Psi'$ for a generator~$\Psi$ in class~\eqref{eq:Psi-class}, and
suppose that on a compact interval $K=[a,b]\subset(0,\infty)$ one has
$0<m\le g'(t)\le M$. Define
\[
g_-:=\min_{t\in K}g(t)>0,\qquad g_+:=\max_{t\in K}g(t).
\]
(The positivity of~$g_-$ follows from $g(0)=0$, $g'>0$ on~$(0,\infty)$, and
compactness of~$K$.) Fix $s,w\in\operatorname{int}K=(a,b)$, set $u=s+w$ and
$r_{*}:=g(s)/g(w)$, so that $v_{*}=w$. With $F(v,r):=r\,g(v)-g(u-v)$, the
implicit-function theorem yields a
$C^{1}$ branch $r\mapsto v(r)$ near $r_{*}$ with
\[
v'(r)=-\frac{g(v(r))}{r\,g'(v(r))+g'(u-v(r))}.
\]
Let $U=(r_1,r_2)$ be a bounded open interval with $r_{*}\in U$ and closure
$\overline U\subset(0,\infty)$, chosen small enough that the branch is defined
with $v(r)\in K$ and $u-v(r)\in K$ for every $r\in\overline U$; such a~$U$
exists by continuity of $r\mapsto v(r)$ because $s,w\in\operatorname{int}K$. In particular $\inf_U r=r_1>0$ and $\sup_U r=r_2<\infty$, and since
$U$ is an interval the whole segment between any $\hat r\in U$ and~$r_{*}$
stays in~$U$. Then for every $r\in U$,
\[
c_0|r-r_{*}|\le |v(r)-w|\le C_0|r-r_{*}|,
\]
where
\[
c_0:=\frac{g_-}{M(1+\sup_U r)},\qquad
C_0:=\frac{g_+}{m(1+\inf_U r)}.
\]
Consequently, whenever $\hat r\in U$, an equilibrium residual
$\varepsilon_v:=|v(\hat r)-w|$ implies the local bound
$|\log(\hat r/r_{*})|\le\varepsilon_v/(c_0\inf_U r)$; the implication is not
claimed for ratios outside~$U$. Finally, the log-ratio map
$\Theta(u,v):=\log\bigl(g(u-v)/g(v)\bigr)$, which returns the logarithm of the
ratio that places the equilibrium at~$v$ under boundary~$u$, is Lipschitz on
$\{(u,v):v\in K,\ u-v\in K\}$, with
\[
|\Theta(u_1,v_1)-\Theta(u_2,v_2)|
\le\frac{M}{g_-}\bigl(|u_1-u_2|+2|v_1-v_2|\bigr).
\]
\end{corollary}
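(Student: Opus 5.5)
The plan is to obtain everything from the implicit-function formula for $v'(r)$ together with the mean value theorem on the interval~$U$. Three ingredients come first, all already available. The branch exists because $\partial_vF=r\,g'(v)+g'(u-v)>0$. The branch is unique and agrees with the global minimizer $\mathcal{R}_{\Psi}(u,r)$, by Theorem~\ref{thm:response-ident}. The interval $U$ is chosen as in the statement, which works by continuity of $r\mapsto v(r)$ together with $v(r_*)=w\in\operatorname{int}K$ and $u-w=s\in\operatorname{int}K$.

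\textbf{Two-sided bound.} For $r\in U$ we have $v(r)\in K$ and $u-v(r)\in K$. So the numerator $g(v(r))$ lies in $[g_-,g_+]$. Since $g'\in[m,M]$ on~$K$, the denominator $r\,g'(v)+g'(u-v)$ lies between $m(r+1)$ and $M(r+1)$. Hence for every $r\in U$,
\[
\frac{g_-}{M(1+\sup_U r)}\le|v'(r)|\le\frac{g_+}{m(1+\inf_U r)}.
\]
Because $U$ is an interval, the segment between $r$ and $r_*$ stays in~$U$. The mean value theorem applied to the $C^1$ function $v$ on that segment then gives $c_0|r-r_*|\le|v(r)-w|\le C_0|r-r_*|$. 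The lower bound needs a little care. The mean value theorem gives $|v(r)-w|=|v'(\xi)|\,|r-r_*|$ for a single point~$\xi$, and at that point $|v'(\xi)|\ge c_0$. So the lower bound also follows directly; no monotonicity argument is needed, although $v'<0$ gives monotonicity anyway.

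\textbf{Log-ratio consequence.} Put $\varepsilon_v=|v(\hat r)-w|$. The lower bound gives $|\hat r-r_*|\le\varepsilon_v/c_0$. By the mean value theorem for $\log$ on the segment between $\hat r$ and $r_*$, which lies in~$U$, we get $|\log\hat r-\log r_*|\le|\hat r-r_*|/\inf_U r$. Combining the two yields the stated bound. I will note explicitly that nothing is claimed for $\hat r\notin U$.

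\textbf{Lipschitz bound for $\Theta$.} Write $\Theta(u,v)=\log g(u-v)-\log g(v)$. On the set where both $u-v$ and $v$ lie in~$K$, the derivative of $\log g$ is $g'/g$, which is at most $M/g_-$ there. The main obstacle is that this domain may not be convex in~$(u,v)$, so the mean value theorem cannot be applied naively along the straight segment joining the two points. I would sidestep this by estimating each logarithm separately, using only that its argument stays in the interval~$K$, which is convex. This gives
\[
|\log g(a_1)-\log g(a_2)|\le\frac{M}{g_-}|a_1-a_2|\qquad(a_1,a_2\in K),
\]
by the mean value theorem on~$K$. I would apply it with $a_i=u_i-v_i$ and with $a_i=v_i$. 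Then the triangle inequality $|(u_1-v_1)-(u_2-v_2)|\le|u_1-u_2|+|v_1-v_2|$ yields the factor $|u_1-u_2|+2|v_1-v_2|$.
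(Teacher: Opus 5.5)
Your proposal is correct. For the two-sided estimate and the logarithmic consequence it follows the paper's argument. You bound the numerator and denominator of $v'(r)$ on $U$, apply the mean value theorem on the segment from $r$ (or $\hat r$) to $r_*$, and then use $|\log x-\log y|\le|x-y|/\inf_U r$ on $U$.

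For $\Theta$ your route differs slightly from the paper's. The paper bounds the partial derivatives, $|\partial_u\Theta|\le M/g_-$ and $|\partial_v\Theta|\le 2M/g_-$, and applies the mean value theorem directly in $(u,v)$.

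That step is legitimate because the domain $\{(u,v):v\in K,\ u-v\in K\}$ is convex. It is the preimage of the square $K\times K$ under the linear map $(u,v)\mapsto(v,u-v)$, i.e.\ a parallelogram. So the obstacle you anticipate does not arise.

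Your termwise split into two one-variable estimates for $\log g$ on the interval $K$ is still valid and gives the same constant $M/g_-\,(|u_1-u_2|+2|v_1-v_2|)$. It just does not need the convexity observation, which the paper uses to phrase the bound as a gradient estimate.
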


\begin{proof}
The inversion formula~\eqref{eq:inversion} gives $r_*=g(s)/g(w)$. Since $F(w,r_*)=0$ and
$\partial_v F(w,r_*)=r_*g'(w)+g'(s)\ge m(r_*+1)>0$, the implicit-function
theorem supplies the $C^1$ branch and the formula for $v'(r)$. On~$U$ one has
$g_-\le g(v(r))\le g_+$ and
$m(1+\inf_U r)\le r g'(v(r))+g'(u-v(r))\le M(1+\sup_U r)$, hence
$c_0\le|v'(r)|\le C_0$ on~$U$. Because $U$ is an interval, the mean-value
theorem applies on the segment joining~$r$ to~$r_{*}$ and yields the two-sided
estimate; the same segment and $\inf_U r=r_1>0$ give
$|\log x-\log y|\le|x-y|/r_1$ for $x,y\in U$, hence the logarithmic bound. Differentiating~$\Theta$ yields
$|\partial_u\Theta|\le M/g_-$ and $|\partial_v\Theta|\le 2M/g_-$ on the
indicated set, which is convex, so the mean-value theorem implies the
Lipschitz estimate.
\end{proof}

The last estimate is the errors-in-variables form of the reconstruction. In
the protocol of Theorem~\ref{thm:response-ident} the intended value is
$\Theta(u,w)=\log\bigl(g(s)/g(w)\bigr)$. If the boundary is realized as
$u+\delta u$ and the equilibrium is located at $w+\delta v$, the attained
ratio is $\exp\Theta(u+\delta u,w+\delta v)$, so, for errors small enough
that $w+\delta v$ and $u+\delta u-w-\delta v$ remain in~$K$, the recovered
log-ratio is in error by at most $(M/g_-)(|\delta u|+2|\delta v|)$. Boundary errors and
location errors therefore enter at the same Lipschitz order; the
errors-in-variables experiments of Section~\ref{sec:numerics} illustrate the
consequence in a parametric setting. The constants degenerate as~$K$
approaches the origin, where $g_-\to0$. That is where the calibration slope
$h'(0^+)$ of Theorem~\ref{thm:response-ident} is read, so
Corollary~\ref{cor:stability} controls ratios away from the origin but not the
absolute scale.

Corollary~\ref{cor:finite-query} bounds the error of a search that has
already attained a small residual, but it does not analyze a search on noisy
observations. The next proposition does so for bisection.

\begin{proposition}[Noisy finite-target recovery by bisection]
\label{prop:bisection}
Fix one target of Corollary~\ref{cor:finite-query}, $u=s+w_0$ with
$s,w_0>0$, and let $r_*=g(s)/g(w_0)$. Suppose a bracket
$[r_L,r_U]\subset(0,\infty)$ containing~$r_*$ is known, and that
$|\partial_r\mathcal{R}_{\Psi}(u,r)|\ge c_0>0$ for all $r\in[r_L,r_U]$ (for
instance with the constant~$c_0$ of Corollary~\ref{cor:stability} when
$[r_L,r_U]\subset U$). The controls $u$ and~$r$ are realized exactly, and a
query at ratio~$r$ returns $\tilde v=\mathcal{R}_{\Psi}(u,r)+e$. Fix a
threshold $\eta>0$ and a query budget~$N$. Starting from $[a,b]=[r_L,r_U]$,
query the midpoint~$\mu$; set $a:=\mu$ if $\tilde v-w_0>\eta$, set $b:=\mu$ if
$\tilde v-w_0<-\eta$, and otherwise stop and return $\hat r:=\mu$; repeat on
the updated bracket. If no stop occurs within $N$ queries, return the
midpoint~$\hat r$ of the final bracket.
\begin{enumerate}
  \item[(i)] If every query error satisfies $|e|\le\eta$, then
  \[
  \bigl|\log(\hat r/r_*)\bigr|
  \le\frac{1}{r_L}\max\Bigl\{\frac{2\eta}{c_0},\
  \frac{r_U-r_L}{2^{N+1}}\Bigr\}.
  \]
  \item[(ii)] If each query returns the mean of $n_{\mathrm{rep}}$ fresh
  replicates whose errors are independent $\mathcal{N}(0,\sigma^2)$ variables,
  independent of the query history, then with
  \[
  \eta=\frac{\sigma}{\sqrt{n_{\mathrm{rep}}}}\sqrt{2\log\frac{2N}{q}},
  \qquad 0<q<1,
  \]
  the bound in~(i) holds with probability at least $1-q$.
\end{enumerate}
\end{proposition}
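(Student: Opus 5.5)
The argument rests on the monotonicity of $r\mapsto\mathcal{R}_{\Psi}(u,r)$ from Theorem~\ref{thm:response-ident}. The map is strictly decreasing, and by hypothesis $|\partial_r\mathcal{R}_\Psi|\ge c_0$ on $[r_L,r_U]$. By the mean-value theorem, for every $r$ in the bracket we therefore have
\[
|\mathcal{R}_\Psi(u,r)-w_0|=|\mathcal{R}_\Psi(u,r)-\mathcal{R}_\Psi(u,r_*)|\ge c_0|r-r_*|.
\]
Under the deterministic hypothesis $|e|\le\eta$ of part~(i), I will first show that the bisection invariant $r_*\in[a,b]$ is preserved. If $\tilde v-w_0>\eta$, then $\mathcal{R}_\Psi(u,\mu)-w_0>\eta-|e|\ge0$. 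So the equilibrium lies to the right of the target, and since the response is decreasing in $r$ this forces $\mu<r_*$; setting $a:=\mu$ keeps $r_*$ in the bracket. The case $\tilde v-w_0<-\eta$ is symmetric, giving $\mu>r_*$ and $b:=\mu$.

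I will then split into two cases according to how the algorithm terminates.

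\emph{Early stop.} The stop at $\mu$ means $|\tilde v-w_0|\le\eta$, hence $|\mathcal{R}_\Psi(u,\mu)-w_0|\le\eta+|e|\le2\eta$. The mean-value bound above then gives $|\hat r-r_*|\le 2\eta/c_0$.

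\emph{No stop within $N$ queries.} The bracket has been halved $N$ times and still contains $r_*$, so its length is $(r_U-r_L)/2^N$. The returned midpoint is therefore within $(r_U-r_L)/2^{N+1}$ of $r_*$.

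In either case I pass to logarithms. Both $\hat r$ and $r_*$ lie in $[r_L,r_U]$, so
\[
|\log\hat r-\log r_*|\le |\hat r-r_*|/r_L,
\]
by the mean-value theorem for $\log$ on that interval, exactly as in Corollary~\ref{cor:stability}. Taking the maximum over the two cases yields the bound in~(i).

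For part~(ii) I reduce to (i) on the event that all query errors are at most $\eta$ in absolute value. At most $N$ queries are made. Each averaged error is $\mathcal{N}(0,\sigma^2/n_{\mathrm{rep}})$ conditionally on the history, because the replicates are fresh and independent of the history. The Gaussian tail bound $P(|Z|>t)\le 2e^{-t^2/2}$ for standard $Z$ gives, with the chosen $\eta$,
\[
P\bigl(|e_k|>\eta\bigr)\le 2\exp\bigl(-\log(2N/q)\bigr)=q/N.
\]
A union bound over the at most $N$ queries, applied to the conditional probabilities so that the adaptivity of the query points does no harm, gives failure probability at most $q$. I would write the union bound as a sum over the query index $k\le N$ of $P(|e_k|>\eta,\ \text{query }k\text{ occurs})$.

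The main obstacle is this adaptivity point in~(ii): the query locations depend on earlier errors. I will handle it by conditioning each error on the history, using the stated independence, before summing. The rest is routine.
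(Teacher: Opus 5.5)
Your proposal is correct and matches the paper's proof essentially step for step: monotonicity of $r\mapsto\mathcal{R}_{\Psi}(u,r)$ gives the sign argument that preserves $r_*$ in the bracket, the proof splits into early-stop and budget-exhaustion cases with the $1/r_L$ logarithm bound, and part~(ii) uses a conditional Gaussian tail bound and a union bound over at most $N$ queries. Your explicit sum of $\Pr(|e_k|>\eta,\ \text{query }k\text{ occurs})$ only spells out in more detail the paper's brief ``conditional on the history'' step.
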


\begin{proof}
By the proof of Theorem~\ref{thm:response-ident},
$r\mapsto\mathcal{R}_{\Psi}(u,r)$ is strictly decreasing, so
$\mathcal{R}_{\Psi}(u,r)-w_0$ has the sign of $r_*-r$. For~(i), if
$|\tilde v-w_0|>\eta$, then $|e|\le\eta$ forces $\tilde v-w_0$ and
$\mathcal{R}_{\Psi}(u,\mu)-w_0$ to have the same sign, and the update keeps
$r_*$ in the bracket. If the procedure stops at~$\mu$, then
$|\mathcal{R}_{\Psi}(u,\mu)-w_0|\le|\tilde v-w_0|+|e|\le2\eta$. Both $\mu$
and~$r_*$ lie in $[r_L,r_U]$, so the mean-value theorem and the slope bound
give $c_0|\mu-r_*|\le2\eta$. Otherwise, after $N$ halvings the bracket has
length $(r_U-r_L)2^{-N}$ and contains~$r_*$, so its midpoint lies within
$(r_U-r_L)2^{-N-1}$ of~$r_*$. In both cases
$|\log\hat r-\log r_*|\le|\hat r-r_*|/r_L$, because $\hat r,r_*\ge r_L$. For~(ii),
conditional on the history, the $j$-th query error is
$\mathcal{N}(0,\sigma^2/n_{\mathrm{rep}})$, so
$\Pr(|e_j|>\eta)\le2\exp(-n_{\mathrm{rep}}\eta^2/(2\sigma^2))=q/N$. There are
at most $N$ queries, so by the union bound every error satisfies
$|e_j|\le\eta$ with probability at least $1-q$. On that event~(i) applies.
\end{proof}

Proposition~\ref{prop:bisection} provides finite-query recovery guarantees
for selected derivative ratios $g(s)/g(w_0)$ of an unrestricted generator. It
is a target-wise reconstruction result, not an estimator of the entire
function~$g$ over a continuum. Its error bounds require an initial bracket,
exact experimental controls, and a positive lower bound on the response
sensitivity; the location errors are independent of the controls, and the
correlated errors-in-variables models of Section~\ref{sec:numerics} are not
covered. For
$n$ targets, replacing $N$ by $nN$ in~$\eta$ makes the bound hold
simultaneously for every recovered ratio $g(s_i)/g(w_0)$ with probability at
least $1-q$. Supplemental~S12 compares the bound with simulated errors, and
checks Theorem~\ref{thm:chain} below on computed chain equilibria.

The constants $m$, $M$, $g_\pm$, and hence $c_0$ and~$C_0$, are properties of
the unknown generator. Corollaries~\ref{cor:finite-query}
and~\ref{cor:stability} and Proposition~\ref{prop:bisection} are therefore
a~priori statements about the inverse map, not a~posteriori certificates. The
stopping tolerance~$\varepsilon$ of Corollary~\ref{cor:finite-query}, or the
error floor $2\eta/c_0$ of Proposition~\ref{prop:bisection}, yields a
guaranteed log-ratio accuracy for a particular reconstruction only if the
relevant constants are bounded independently, for example by prior
constitutive information. The log-ratio bounds use~$c_0$ only as a lower
bound on $|\partial_r\mathcal{R}_{\Psi}(u,\cdot)|$, a slope of the forward map
that can in principle be measured; we do not analyze an estimator for it. The
Lipschitz constant of~$\Theta$ involves~$g'/g$ on~$K$, which is scale-free but
is itself part of what is being reconstructed.

\subsection{Graph reductions}
\label{sec:graph}

\begin{corollary}[Single-free-vertex reduction on a general finite graph]
\label{cor:graph-reduction}
This reduction requires the following boundary-control protocol: a single free
vertex, every neighbor pinned, one drive edge at~$u$, and all other incident
edges pinned at the reference. The effective ratio is either known and freely
variable, or held fixed at a possibly unknown value different from one.

Let $G=(X,\mathcal{E})$ be a finite weighted graph with a single free vertex~$I$
of weight~$\alpha>0$, and suppose every neighbor of~$I$ is pinned. Pin one
neighbor~$B$ at log-boundary~$u$ with edge weight~$\beta>0$, and pin every
other neighbor at the reference value~$0$, with edge weights~$\beta_j>0$. For
any generator in class~\eqref{eq:Psi-class} the free-coordinate objective equals
\[
\Bigl(\alpha+\sum_j\beta_j\Bigr)\Psi(v)+\beta\,\Psi(u-v)
\;+\;\mathrm{const},
\]
where the additive constant collects the pinned-site terms and any
pinned--pinned edge terms of the ambient graph; being independent of~$v$, it
shifts the objective without moving the minimizer. The problem therefore
reduces to the two-site problem with effective ratio
$r_{\mathrm{eff}}=(\alpha+\sum_j\beta_j)/\beta$. Theorem~\ref{thm:response-ident},
Corollaries~\ref{cor:finite-query}--\ref{cor:stability}, and
Proposition~\ref{prop:bisection} therefore apply under this protocol whenever
$r_{\mathrm{eff}}$ is known and can be varied over the positive range those
statements require. Theorem~\ref{thm:fixed-ratio} applies when
$r_{\mathrm{eff}}\neq1$ is held fixed, possibly at an unknown value, and $u$
is swept.
\end{corollary}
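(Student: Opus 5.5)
The plan is to write out the free-coordinate objective explicitly and observe that it reduces exactly to the two-site objective. Only one vertex~$I$ is free, so in log-coordinates~\eqref{eq:graph-functional-log} every summand that does not involve~$y_I=v$ is constant in~$v$. These summands are the site terms of pinned vertices and the edge terms between two pinned vertices. The summands that do involve~$v$ are three kinds. The site term is $\alpha\Psi(v)$. The drive edge gives $\beta\Psi(v-u)$, which equals $\beta\Psi(u-v)$ because $\Psi$ is even. Each remaining incident edge joins~$I$ to a neighbor pinned at log-value~$0$ and contributes $\beta_j\Psi(v-0)=\beta_j\Psi(v)$. No other edge contains~$I$, since every neighbor of~$I$ is pinned by hypothesis. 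Adding these terms gives the displayed objective. The constant is independent of~$v$, so it neither moves the minimizer nor alters the gradient.

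Next, divide the objective by $\beta>0$. This leaves the minimizer unchanged and gives $r_{\mathrm{eff}}\Psi(v)+\Psi(u-v)$ with $r_{\mathrm{eff}}=(\alpha+\sum_j\beta_j)/\beta>0$. Its unique minimizer is therefore exactly $\mathcal{R}_\Psi(u,r_{\mathrm{eff}})$ as defined in~\eqref{eq:response-op}. Existence and uniqueness are already supplied by Proposition~\ref{prop:P3}(iii), or alternatively by Theorem~\ref{thm:response-ident}.

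The transfer statements then follow, because each cited result depends on the data only through the map $(u,r)\mapsto\mathcal{R}_\Psi(u,r)$ evaluated at the controls. Suppose $r_{\mathrm{eff}}$ is known and ranges over the required positive set. This is possible, for instance, by varying~$\alpha$ or~$\beta$ while ensuring the range of $r_{\mathrm{eff}}$ covers what is needed. In that case Theorem~\ref{thm:response-ident}, Corollaries~\ref{cor:finite-query}--\ref{cor:stability}, and Proposition~\ref{prop:bisection} apply verbatim with $r=r_{\mathrm{eff}}$. Now suppose instead that $r_{\mathrm{eff}}\ne1$ is held fixed, possibly at an unknown value, while $u$ is swept. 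Then the observed curve is $V(u)=\mathcal{R}_\Psi(u,r_{\mathrm{eff}})$, and Theorem~\ref{thm:fixed-ratio} applies directly; it also returns $r_{\mathrm{eff}}$ through part~(i).

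There is no real technical obstacle here. The one point needing care is bookkeeping: I must confirm that no edge incident to~$I$ leads to a free vertex and that the reference pins contribute $\Psi(v)$ and not $\Psi(v-c)$. Both are guaranteed by the protocol hypotheses, namely that all neighbors are pinned and that all non-drive neighbors sit at log-value~$0$.
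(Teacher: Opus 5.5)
Your proposal is correct and follows essentially the same route as the paper. You isolate the terms involving $v$, note that each reference-pinned edge contributes $\beta_j\Psi(v)$ while every other term is constant, and obtain the two-site problem with ratio $r_{\mathrm{eff}}$; the paper finishes through the stationarity equation $r_{\mathrm{eff}}\,g(v)=g(u-v)$ rather than by dividing the objective by $\beta$, which is only a cosmetic difference.
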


\begin{proof}
Each reference-pinned edge contributes a term $\beta_j\Psi(v-0)=\beta_j\Psi(v)$,
which augments the free-site coefficient. All remaining terms of
$C_{\mathcal{M},\Psi}$ involve pinned coordinates only and are therefore
constant in~$v$. Differentiating in~$v$ annihilates that constant, so
stationarity becomes $r_{\mathrm{eff}}g(v)=g(u-v)$, as claimed.
\end{proof}

Thus the two-site problem is not merely an isolated toy geometry. It is
the local inverse problem seen by any single free vertex whose remaining
neighbors can be pinned at the reference, provided the resulting effective
weight ratio is either known and controllable or fixed and different from one.
The reduction does not reach equilibria with several free vertices. The next
theorem treats one such coupled configuration.

\begin{figure}[htbp]
\centering
\begin{tikzpicture}[
  free/.style={circle,draw=black,fill=white,minimum size=11mm,inner sep=1pt},
  pinned/.style={rectangle,draw=black,fill=gray!18,minimum size=10mm,inner sep=2pt},
  every node/.style={font=\footnotesize}
]
  \node[pinned] (B) at (0,0) {$B:u$};
  \node[free] (I) at (3.4,0) {$I:x$};
  \node[free] (L) at (6.8,0) {$L:y$};
  \draw (B) -- node[above] {$\beta$} (I);
  \draw (I) -- node[above] {$\delta$} (L);
  \node[below=1.6mm of I] {site weight $\alpha$};
  \node[below=1.6mm of L] {site weight $\gamma$};
\end{tikzpicture}
\caption{The two-free-vertex chain of Theorem~\ref{thm:chain}. The boundary~$u$
is swept and both free locations are observed. The leaf~$L$ sees the
location~$x$ of~$I$ as its boundary at the fixed ratio~$\gamma/\delta$.}
\label{fig:chain}
\end{figure}

\begin{theorem}[Identification on a chain with two free vertices]
\label{thm:chain}
Consider the path $B$--$I$--$L$ of Figure~\ref{fig:chain}: $B$ is pinned at
log-boundary $u>0$; the free vertices $I$ and~$L$ have log-coordinates $x$
and~$y$ and site weights $\alpha,\gamma>0$; the edges $BI$ and~$IL$ have
weights $\beta,\delta>0$; and no other edges meet $I$ or~$L$. For $\Psi$ in
class~\eqref{eq:Psi-class} the free-coordinate objective
\[
C_u(x,y)=\alpha\Psi(x)+\beta\Psi(u-x)+\delta\Psi(x-y)+\gamma\Psi(y)
\]
has a unique minimizer $(x(u),y(u))$, and the following hold.
\begin{enumerate}
  \item[(i)] $0<y(u)<x(u)<u$, and $y(u)=\mathcal{R}_{\Psi}(x(u),\gamma/\delta)$.
  \item[(ii)] $u\mapsto x(u)$ is $C^1$ and strictly increasing, with
    $x(u)\to0$ as $u\to0^+$.
  \item[(iii)] $x(u)\to\infty$ as $u\to\infty$ if and only if $g$ is unbounded
    on $(0,\infty)$ or $\beta\ge\alpha+\min\{\gamma,\delta\}$.
\end{enumerate}
When the condition in~(iii) holds and $\gamma\neq\delta$, the observed pairs
$\{(x(u),y(u)):u>0\}$ determine $\gamma/\delta$ and determine~$\Psi$ up to a
positive scale, uniquely after the calibration $g'(0)=1$. The boundary must be
swept over $(0,\infty)$, but its values need not be recorded, and the weights
need not be known; they must only stay fixed during the sweep.
\end{theorem}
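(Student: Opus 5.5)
The plan is to reduce everything to the two-site response of Theorem~\ref{thm:response-ident} applied to the leaf, and then invoke Theorem~\ref{thm:fixed-ratio} with $x$ playing the role of the swept boundary. Existence and uniqueness of $(x(u),y(u))$ follow from Proposition~\ref{prop:P3}(iii). The stationarity system is
\[
\alpha g(x)+\delta g(x-y)=\beta g(u-x),\qquad \delta g(x-y)=\gamma g(y).
\]
The second equation says that, for fixed $x$, $y$ minimizes $\gamma\Psi(y)+\delta\Psi(x-y)$.

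For $x>0$ this gives $y=\mathcal{R}_{\Psi}(x,\gamma/\delta)\in(0,x)$. By oddness of $g$, for $x<0$ the leaf minimizer satisfies $y\in(x,0)$, and for $x=0$ it is $y=0$. In every case $g(x-y)$ has the sign of $x$, so the left side of the first equation has the sign of $x$. The right side has the sign of $u-x$. Hence $x$ and $u-x$ have the same sign with sum $u>0$, which forces $0<x<u$, and (i) follows.

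For (ii), write $Y(x):=\mathcal{R}_{\Psi}(x,\gamma/\delta)$. By the implicit function theorem on the leaf equation,
\[
Y'(x)=\frac{g'(x-Y)}{(\gamma/\delta)g'(Y)+g'(x-Y)}\in(0,1).
\]
Eliminating $y$ gives the scalar equation
\[
\phi(x):=\alpha g(x)+\gamma g(Y(x))=\beta g(u-x).
\]
Here $\phi$ is $C^1$ and strictly increasing, while the right side is strictly decreasing in $x$. The implicit function theorem then gives $x'(u)=\beta g'(u-x)/(\phi'(x)+\beta g'(u-x))>0$. Finally $0<x(u)<u$ gives $x(u)\to0$ as $u\to0^+$.

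Part (iii) is the step I expect to be delicate, especially the boundary case $\beta=\alpha+\min\{\gamma,\delta\}$. Since $x$ is increasing, $x(u)\to\infty$ fails exactly when $x$ stays bounded by some $X$.

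If $g$ is unbounded, boundedness is impossible: $u-x\to\infty$ would make $\beta g(u-x)$ unbounded while $\phi(x)\le\phi(X)$.

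If $g$ is bounded, set $G:=\sup g$. The first task is $\lim_{x\to\infty}\gamma g(Y(x))=\min\{\gamma,\delta\}G$. Both $Y$ and $x-Y$ are increasing, because $0<Y'<1$, and they cannot both stay bounded. If $\gamma\le\delta$, then $g(Y)\ge g(x-Y)$, so $Y\to\infty$ and $\gamma g(Y)\to\gamma G$. The other case is symmetric. Hence $\phi(x)\uparrow(\alpha+\min\{\gamma,\delta\})G=:\Phi$, and $\phi(x)<\Phi$ for all $x$ by strict monotonicity.

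Now suppose $\beta\ge\alpha+\min\{\gamma,\delta\}$ and $x(u)\le X$. Then $\beta g(u-x(u))\to\beta G\ge\Phi>\phi(X)\ge\phi(x(u))$, a contradiction, so $x\to\infty$.

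Conversely, if $\beta<\alpha+\min\{\gamma,\delta\}$, pick $X$ with $\phi(X)>\beta G$. Since $\beta g(u-x)<\beta G$, the solution always satisfies $x(u)<X$, so $x$ stays bounded.

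For identification, (ii) and (iii) show that $u\mapsto x(u)$ is a continuous increasing bijection of $(0,\infty)$ onto $(0,\infty)$. So the observed pairs form the full graph $\{(x,\mathcal{R}_{\Psi}(x,\gamma/\delta)):x>0\}$ of the fixed-ratio curve $V(x):=\mathcal{R}_{\Psi}(x,\gamma/\delta)$. Recovering $V$ needs neither the values of $u$ nor the weights. Applying Theorem~\ref{thm:fixed-ratio} with $\bar u=\infty$ and unknown ratio $\gamma/\delta\neq1$ then identifies $\gamma/\delta$ by part (i) of that theorem, and identifies $\Psi$ up to positive scale, uniquely after calibration, by~\eqref{eq:koenigs}.
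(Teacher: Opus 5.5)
Your proof is correct and follows essentially the same route as the paper. You read the leaf equation as a two-site response for (i). For (iii) you use the strictly increasing reduced map $\phi(x)=\alpha g(x)+\gamma g(Y(x))$, which equals the paper's $\alpha g(x)+\delta g(x-Y(x))$ by the leaf equation; its unattained supremum is $(\alpha+\min\{\gamma,\delta\})\sup g$. For identification you apply Theorem~\ref{thm:fixed-ratio} with $\bar u=\infty$, as the paper does. The differences are only cosmetic: you get (ii) from the scalar reduced equation rather than the $2\times2$ Hessian, you prove (i) by a sign comparison rather than by contradiction on $y\le0$, and you prove the converse in (iii) directly rather than by contradiction.
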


\begin{proof}
Existence and uniqueness follow from Proposition~\ref{prop:P3}, and the
stationarity equations are
\[
\alpha g(x)-\beta g(u-x)+\delta g(x-y)=0,\qquad
\gamma g(y)-\delta g(x-y)=0.
\]
(i) Write $r_{\mathrm{leaf}}:=\gamma/\delta$. The second equation,
$r_{\mathrm{leaf}}\,g(y)=g(x-y)$, is the two-site stationarity condition with
boundary~$x$ and ratio~$r_{\mathrm{leaf}}$. Since $g$ is odd and increasing, $y$
and~$x-y$ have the same sign. If $y\le0$, then $x\le y\le0$, so
$\alpha g(x)+\delta g(x-y)\le0<\beta g(u-x)$, contradicting the first
equation. Hence $y>0$ and $x-y>0$, and the first equation gives
$\beta g(u-x)>0$, so $x<u$. Uniqueness of the two-site minimizer gives
$y=\mathcal{R}_{\Psi}(x,r_{\mathrm{leaf}})$.

(ii) The Hessian of~$C_u$,
\[
H=\begin{pmatrix}
\alpha g'(x)+\beta g'(u-x)+\delta g'(x-y) & -\delta g'(x-y)\\
-\delta g'(x-y) & \gamma g'(y)+\delta g'(x-y)
\end{pmatrix},
\]
is positive definite because $g'>0$. The implicit-function theorem therefore
makes $(x,y)$ a $C^1$ function of~$u$, with
$H\,(x',y')^{\top}=(\beta g'(u-x),0)^{\top}$, so
$x'=\beta g'(u-x)\,H_{22}/\det H>0$. The bound $0<x(u)<u$ gives $x\to0$ as
$u\to0^+$.

(iii) Let $\phi(x):=\alpha g(x)+\delta g\bigl(x-\mathcal{R}_{\Psi}(x,r_{\mathrm{leaf}})\bigr)$,
so that by~(i) the first equation reads $\beta g(u-x(u))=\phi(x(u))$.
Differentiating $r_{\mathrm{leaf}}\,g(R)=g(x-R)$ with
$R:=\mathcal{R}_{\Psi}(x,r_{\mathrm{leaf}})$ gives
$\partial_xR=g'(x-R)/\bigl(r_{\mathrm{leaf}}\,g'(R)+g'(x-R)\bigr)\in(0,1)$, so
both $R$ and $x-R$ are $C^1$ and strictly increasing in~$x$, and $\phi$ is
continuous and strictly increasing. Let
$g_\infty:=\sup_{t>0}g(t)\in(0,\infty]$ and $x_\infty:=\lim_{u\to\infty}x(u)\in(0,\infty]$,
which exists by~(ii). If $x_\infty<\infty$, then $u-x(u)\to\infty$, and letting
$u\to\infty$ gives $\beta g_\infty=\phi(x_\infty)<\infty$, so $g_\infty<\infty$. As $x\to\infty$,
the larger of $\mathcal{R}_{\Psi}(x,r_{\mathrm{leaf}})$ and $x-\mathcal{R}_{\Psi}(x,r_{\mathrm{leaf}})$
is at least $x/2$. Hence $g(x-\mathcal{R}_{\Psi}(x,r_{\mathrm{leaf}}))\to g_\infty$ if
$r_{\mathrm{leaf}}\ge1$, and
$g(x-\mathcal{R}_{\Psi}(x,r_{\mathrm{leaf}}))=r_{\mathrm{leaf}}\,g(\mathcal{R}_{\Psi}(x,r_{\mathrm{leaf}}))\to r_{\mathrm{leaf}} g_\infty$
if $r_{\mathrm{leaf}}<1$. Thus $\sup\phi=(\alpha+\min\{\gamma,\delta\})g_\infty$, and the supremum is
not attained. So $x_\infty<\infty$ implies $\beta g_\infty=\phi(x_\infty)<(\alpha+\min\{\gamma,\delta\})g_\infty$,
that is, $g_\infty<\infty$ and $\beta<\alpha+\min\{\gamma,\delta\}$. Conversely, if
$g_\infty<\infty$, $\beta<\alpha+\min\{\gamma,\delta\}$, and $x_\infty=\infty$, then
$\phi(x(u))\to(\alpha+\min\{\gamma,\delta\})g_\infty>\beta g_\infty\ge\beta g(u-x(u))$,
contradicting the first equation for large~$u$.

Under the condition in~(iii), $x$ is a continuous increasing bijection of
$(0,\infty)$ onto itself. By~(i) the observed pairs are therefore exactly the
graph of the fixed-ratio response $x\mapsto\mathcal{R}_{\Psi}(x,r_{\mathrm{leaf}})$ on
$(0,\infty)$, and Theorem~\ref{thm:fixed-ratio} with $\bar u=\infty$ applies
when $r_{\mathrm{leaf}}\neq1$.
\end{proof}

The reconstruction uses only the leaf equation; the coupled first equation
serves to guarantee the sweep. The sweep criterion in
Theorem~\ref{thm:chain}(iii) assumes fixed weights and guarantees that the
observed interior coordinate~$x$ ranges over the positive half-line; the
criterion involves unobserved weights, but the range of~$x$ actually swept is
observed. Once that range condition is established independently,
reconstruction from the leaf equation requires only that the ratio
$\gamma/\delta$ remain fixed and unequal to one. Thus $\alpha$ and~$\beta$ may
vary across observations without invalidating the reconstruction argument,
provided the required range of~$x$ is attained. Such variation is not covered
by the sweep criterion proved in part~(iii). When $g$ is bounded and
$\beta<\alpha+\min\{\gamma,\delta\}$, $x(u)$ increases to the finite limit
$x_\infty=\phi^{-1}(\beta g_\infty)$ without reaching it. For
$\gamma\neq\delta$, applying the local form of Theorem~\ref{thm:fixed-ratio}
on $(0,\bar u]$ for each $\bar u<x_\infty$ then identifies~$g$ up to scale on
$(0,s_\infty)$, where $s_\infty$ is the larger of
$\mathcal{R}_{\Psi}(x_\infty,\gamma/\delta)$ and
$x_\infty-\mathcal{R}_{\Psi}(x_\infty,\gamma/\delta)$. The same argument
applies to any free leaf joined by a single edge to a vertex whose observed
coordinate takes every value in an interval $(0,\bar x)$, provided the leaf's
site-to-edge weight ratio is fixed and different from one. Compared with
Corollary~\ref{cor:graph-reduction}, the leaf's neighbor is free and observed
rather than pinned and controlled. Errors in the observed~$x$ take the place
of boundary errors in the leaf equation, and their effect on the fixed-ratio
reconstruction is not analyzed. Identification on graphs without such a leaf,
or where the sweep cannot be established, remains open.

\subsection{Equal-weight non-identifiability}
\label{sec:symmetric-design}
On the equal-weight two-site chain ($r=1$), every strictly convex even
log-coordinate cost yields an objective of the form $\Psi(v)+\Psi(u-v)$ up to a
constant. Differentiating gives $\Psi'(v)-\Psi'(u-v)=0$, hence
$v^\star=u/2$ independently of~$\Psi$. Location alone cannot identify the
generator under reflection symmetry; asymmetric weight ratios are essential.
Details, including the two-site closed form under unequal weights, appear in
Supplemental~S5.

\section{The reciprocal candidate $J$ and the family $J_p$}
\label{sec:reciprocal}

The inverse theorem and the RCL characterization answer different questions.
Section~\ref{sec:ident} asks what generator the response map determines
without assuming a particular functional form. RCL instead supplies an
independently motivated candidate,~$J$, whose predicted response can then
be compared with that reconstruction or with finite-data alternatives.
Proposition~\ref{prop:J-unique} records that characterization via the
classical d'Alembert classification~\cite{Aczel1966}; it is included so
that the subsequent comparison with~$J$ and the family~$J_p$ rests on an
independent selection principle rather than on an a~priori restriction of
the inverse theorem.

\begin{proposition}[Background: uniqueness of the reciprocal cost under RCL]
\label{prop:J-unique}
\label{thm:J-unique}
Let $F:\mathbb{R}_{>0}\to\mathbb{R}$ be continuous and satisfy reciprocity
$F(x)=F(x^{-1})$, normalization $F(1)=0$, the ratio-composition law
\[
H(xy)+H(x/y)=2H(x)H(y),\qquad H:=F+1,
\]
and the unit log-curvature calibration as a punctured limit
\[
\lim_{\substack{t\to0\\ t\neq0}}\frac{2F(e^t)}{t^2}=1
\]
(the quotient is undefined at $t=0$). Then $F=J$, where
\begin{equation}
\label{eq:J-def}
J(x)=\tfrac12\bigl(x+x^{-1}\bigr)-1=\cosh(\log x)-1.
\end{equation}
\end{proposition}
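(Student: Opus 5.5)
The plan is to reduce the statement to the classical d'Alembert functional equation and then let the calibration select the hyperbolic branch. Set $H:=F+1$ and pass to log-coordinates, $f(t):=H(e^t)$ for $t\in\mathbb{R}$. Substituting $x=e^a$, $y=e^b$ turns the ratio-composition law into
\[
f(a+b)+f(a-b)=2f(a)f(b)\qquad(a,b\in\mathbb{R}),
\]
with $f$ continuous. Reciprocity makes $f$ even, and normalization gives $f(0)=1$. I will invoke the classical classification of continuous solutions of d'Alembert's equation~\cite{Aczel1966}. Every continuous solution is one of $f\equiv0$, $f(t)=\cos(kt)$ with $k\ge0$, or $f(t)=\cosh(kt)$ with $k>0$; the constant $f\equiv1$ is the case $k=0$. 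The condition $f(0)=1$ rules out the zero solution.

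Next I will impose the calibration. It reads $\lim_{t\to0,t\neq0}2(f(t)-1)/t^2=1$. Expanding each branch near the origin gives the following limits.
\begin{itemize}
\item For $f=\cos(kt)$ the limit is $-k^2\le0$, which can never equal $1$. This excludes the cosine branch, including the constant solution $k=0$.
\item For $f=\cosh(kt)$ the limit is $k^2$, so calibration forces $k=1$.
\end{itemize}
Hence $f=\cosh$, that is, $F(x)=\cosh(\log x)-1=\tfrac12(x+x^{-1})-1=J(x)$.

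The main obstacle is citing the classification correctly under mere continuity. If a self-contained argument is preferred, I would follow the standard route. Continuity and $f(0)=1$ give some $\delta>0$ with $\int_0^\delta f>0$. Integrating the equation in $b$ over $[0,\delta]$ expresses $f(a)\int_0^\delta f$ through integrals of $f$ over shifted intervals. This shows $f\in C^1$, and by bootstrapping $f\in C^2$. Differentiating twice in $b$ at $b=0$ then yields $f''(a)=f''(0)f(a)$, with $f(0)=1$ and $f'(0)=0$ by evenness. Solving this ODE gives $\cos$, $1$ or $\cosh$ according to the sign of $f''(0)$, and the calibration fixes $f''(0)=1$. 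A minor point is that the punctured-limit form of the calibration is exactly what pins down $f''(0)$.
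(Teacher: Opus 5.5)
Your proposal is correct and follows the same route as the paper's sketch. Like the paper, you pass to $f(t)=F(e^t)+1$, recognize d'Alembert's equation with $f$ continuous, even and $f(0)=1$, invoke the classical classification, and let the calibration select $\cosh t$, since the punctured limit equals $-k^2$ on the cosine branch and $k^2$ on the hyperbolic branch. Your optional self-contained argument (averaging in $b$ to obtain $C^2$ regularity, then solving $f''=f''(0)f$) is the standard proof of that classification and is also sound.
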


\begin{proof}[Proof sketch]
With $\mathcal{H}(t):=F(e^t)+1$, reciprocity and normalization make~$\mathcal{H}$
continuous and even with $\mathcal{H}(0)=1$, and RCL becomes d'Alembert's
equation $\mathcal{H}(t+u)+\mathcal{H}(t-u)=2\mathcal{H}(t)\mathcal{H}(u)$. Its
continuous even solutions with $\mathcal{H}(0)=1$ are $\cosh(kt)$, $\cos(kt)$,
and the constant~$1$~\cite{Aczel1966}, and the calibration
$\mathcal{H}''(0)=1$ selects $\cosh t$. The full argument is Supplemental
Theorem~S1; see also~\cite{WashburnAczel}.
\end{proof}

\begin{corollary}[Propagation of scalar uniqueness under a pairwise homogeneous ansatz]
\label{thm:graph-propagation}
Let $C:(\mathbb{R}_{>0})^{X}\to\mathbb{R}_{\ge0}$ be a deviation functional on a
finite weighted graph satisfying:
\begin{enumerate}
  \item[\textup{(L)}] \emph{Locality.} There are continuous functions
    $f_s\ge0$ and $h_{ss'}\ge0$ with
    $C[m]=\sum_{s}f_s(m_s)+\sum_{\{s,s'\}}h_{ss'}(m_s,m_{s'})$.
  \item[\textup{(G)}] \emph{Ratio-invariant, reciprocal edges.} Each edge term
    depends on its endpoints only through their ratio and is inversion-invariant:
    $h_{ss'}(x,y)=\theta_{ss'}(x/y)$ with $\theta_{ss'}(r)=\theta_{ss'}(1/r)$.
  \item[\textup{(H)}] \emph{Homogeneity.} A single continuous scalar shape
    $\phi:\mathbb{R}_{>0}\to\mathbb{R}_{\ge0}$ governs every local term:
    $f_s=\omega_s\phi$ and $\theta_{ss'}=\eta_{ss'}\phi$ for constants
    $\omega_s,\eta_{ss'}>0$.
  \item[\textup{(A)}] \emph{Scalar axioms.} $\phi$ satisfies the hypotheses of
    Proposition~\ref{prop:J-unique}.
\end{enumerate}
Then $\phi=J$, and writing $w_s=\omega_s$, $w_{ss'}=\eta_{ss'}$, one has
$C=C_{\mathcal{M},\Psi}$ with $\Psi(t)=\cosh t-1$. Normalization is a
consequence rather than a hypothesis: $C[m]=0$ if and only if
$m\equiv1$. Within the class fixed by (L), (G), and~(H), the scalar axioms
leave only the weights as free parameters.
\end{corollary}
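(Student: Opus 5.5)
The strategy is to reduce each local term to the scalar shape $\phi$, apply Proposition~\ref{prop:J-unique} to obtain $\phi=J$, and then rewrite the functional in log-coordinates so that it matches Definition~\ref{def:deviation-functional} term by term.

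\emph{Identifying the shape.} Hypothesis~(A) states that $\phi$ is continuous, reciprocal, normalized by $\phi(1)=0$, satisfies RCL, and obeys the punctured-limit calibration. These are exactly the hypotheses of Proposition~\ref{prop:J-unique}, so that result gives $\phi=J$ at once. No further functional-equation work is needed here; the d'Alembert classification is already contained in that proposition.

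\emph{Assembling the functional.} By (L), (G) and~(H),
\[
C[m]=\sum_s\omega_s J(m_s)+\sum_{\{s,s'\}}\eta_{ss'}J(m_s/m_{s'}).
\]
Since $J(x)=\cosh(\log x)-1$, we have $J(x)=\Psi(\log x)$ with $\Psi(t)=\cosh t-1$. Setting $w_s=\omega_s$ and $w_{ss'}=\eta_{ss'}$ makes this identical to~\eqref{eq:graph-functional}. Reciprocity of $J$ ensures that the edge term does not depend on edge orientation, as (G) requires. This $\Psi$ belongs to class~\eqref{eq:Psi-class}: it is even, $C^2$, satisfies $\Psi(0)=0$, and has $\Psi''=\cosh>0$.

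\emph{Normalization as a consequence.} Each summand is a positive weight times $J\ge0$, and $J(x)=0$ exactly when $x=1$. Since $C=C_{\mathcal{M},\Psi}$, items~(i) and~(ii) of Proposition~\ref{prop:P3} apply: $C\ge0$, and $C[m]=0$ if and only if every site term vanishes, i.e.\ $m_s=1$ for all $s$, in which case the edge terms vanish as well. The closing claim, that only the weights remain free, is then immediate, because the shape is forced to be $J$ and (H) leaves only the constants $\omega_s$ and $\eta_{ss'}$ undetermined.

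I expect the only point needing care to be confirming that (A) really supplies all of the hypotheses of Proposition~\ref{prop:J-unique}. In particular, continuity of $\phi$ is stated in~(H), and $\phi\ge0$ is automatic once $\phi=J$. Everything else is bookkeeping.
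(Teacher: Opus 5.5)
Your proposal is correct and matches the paper's proof step for step. You reduce via (L), (G), (H) to $\sum_s\omega_s\phi(m_s)+\sum\eta_{ss'}\phi(m_s/m_{s'})$, invoke Proposition~\ref{prop:J-unique} for $\phi=J$, rename the weights, and obtain normalization from $J\ge0$ vanishing only at $1$ together with $\omega_s>0$. Your appeal to Proposition~\ref{prop:P3} is harmless but redundant, because your direct summand argument already gives the equivalence.
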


\begin{proof}
By (L), (G), and~(H),
\[
C[m]
  =\sum_{s}\omega_s\,\phi(m_s)
  +\sum_{\{s,s'\}}\eta_{ss'}\,\phi\!\left(\frac{m_s}{m_{s'}}\right).
\]
Hypothesis~(A) and Proposition~\ref{prop:J-unique} give $\phi=J$. Renaming
weights yields $C=C_{\mathcal{M},\Psi}$ with $\Psi(t)=\cosh t-1$. For
normalization, $J\ge0$ with $J(x)=0$ only at $x=1$, so every summand of the
displayed expression is nonnegative; since all $\omega_s>0$, the total vanishes
only when $m_s=1$ at every site, and the uniform field indeed gives $C=0$.
\end{proof}

Equivalently $J(x)=(x-1)^2/(2x)$. The cost is nonnegative, strictly convex, and
diverges as $x\to0^+$ and as $x\to\infty$; $2J(x/y)$ is the additive
symmetrization of the Itakura--Saito (COSH)
divergence~\cite{CichockiAmari2010,BanerjeeMBG2005,Amari2016}. Multiplicative
data alone do not imply RCL; the law is an additional all-scale assumption that
distinguishes~$J$ from nearby unit-curvature rivals such as
$Q(x)=\tfrac12(\log x)^2$ and the one-parameter family
\begin{equation}
\label{eq:Jp-def}
J_p(x)=\frac{\cosh(p\log x)-1}{p^2}\quad(p>0),
\end{equation}
with $J_1=J$ and $J_p\to Q$ pointwise as $p\to0$. Writing
$\Psi_p(t)=(\cosh(pt)-1)/p^2$ one has $g_p=\Psi_p'$ and
$g_p'(0)=\Psi_p''(0)=\cosh(0)=1$ for every $p>0$, so each~$J_p$ lies in the
unit-curvature calibrated class of Theorem~\ref{thm:response-ident}.
Corollary~\ref{thm:graph-propagation} records the termwise
propagation of Proposition~\ref{prop:J-unique} under a pairwise homogeneous
graph ansatz.

For~$J_p$ the two-site stationarity condition is elementary. Writing
$r=\alpha/\beta$, one finds $\alpha\sinh(pv)=\beta\sinh(p(u-v))$, and the
substitution $X=e^{pv}$ yields the closed-form response
\begin{equation}
\label{eq:vstar-Jp}
v^\star_p(u,r)=\frac{1}{2p}\,
  \log\!\left(\frac{r+e^{pu}}{r+e^{-pu}}\right).
\end{equation}
At $r=1$ one has $v^\star_p=u/2$ for every~$p$, recovering the equal-weight
non-identifiability of Section~\ref{sec:symmetric-design}.

\begin{figure}[htbp]
\centering
\begin{minipage}[c]{0.40\linewidth}
\centering
\begin{tikzpicture}[
  free/.style={circle,draw=black,fill=white,minimum size=10mm,inner sep=1pt},
  pinned/.style={rectangle,draw=black,fill=gray!18,minimum size=9mm,inner sep=2pt},
  every node/.style={font=\footnotesize}
]
  \node[pinned] (L) at (0,0) {$L$};
  \node[free] (C) at (1.6,0) {$I$};
  \node[pinned] (R) at (3.2,0) {$R$};
  \draw (L) -- node[above] {$\beta_L$} (C);
  \draw (C) -- node[above] {$\beta_R$} (R);
  \node[below=1.2mm of L] {$u_L$};
  \node[below=1.2mm of C] {$v,\ \alpha$};
  \node[below=1.2mm of R] {$u_R$};
\end{tikzpicture}
\end{minipage}\hfill
\begin{minipage}[c]{0.56\linewidth}
\centering
\begin{tikzpicture}
\begin{axis}[
  width=\linewidth,height=0.70\linewidth,
  xlabel={boundary drive $u$},
  ylabel={$|v^\star_J-v^\star_{J_{1.1}}|$},
  xmin=0,xmax=4,ymin=0,ymax=0.055,
  legend style={at={(0.04,0.96)},anchor=north west,font=\scriptsize,
    draw=gray!45,fill=white},
  tick label style={font=\footnotesize},label style={font=\footnotesize}
]
\addplot[blue,mark=*,mark size=1.1pt] coordinates {
  (0,0)(0.5,0.00041)(1,0.00303)(1.5,0.00896)(2,0.01766)
  (2.5,0.02750)(3,0.03677)(3.5,0.04448)(4,0.05034)};
\addlegendentry{$\alpha/\beta=0.25$ or $4$}
\addplot[red,mark=square*,mark size=1.1pt] coordinates {
  (0,0)(0.5,0.00031)(1,0.00217)(1.5,0.00597)(2,0.01095)
  (2.5,0.01600)(3,0.02038)(3.5,0.02381)(4,0.02632)};
\addlegendentry{$\alpha/\beta=0.5$ or $2$}
\end{axis}
\end{tikzpicture}
\end{minipage}
\caption{Left: three-site discriminator. Right: two-site equilibrium-location
separation between~$J$ and~$J_{1.1}$ under asymmetric weight ratios.}
\label{fig:Jp-separation}
\end{figure}

\subsection{Secondary three-site discriminator}
\label{sec:threesite-local}
As a local contrast at one operating point, an asymmetric three-site chain
separates~$J$ from the quadratic-log comparator~$Q$ at cubic order near the
reference.

\begin{proposition}[Local three-site separation of $J$ from $Q$]
\label{prop:threesite}
The one-free-site objective for a scalar cost with $\Psi(t):=F(e^t)$ is
\begin{equation}
\label{eq:threesite-obj}
C_I(v)=\alpha\,\Psi(v)+\beta_L\,\Psi(u_L-v)+\beta_R\,\Psi(v-u_R).
\end{equation}
For $J$ ($\Psi=\cosh-1$) the minimizer $v_J$ solves
\begin{equation}
\label{eq:threesite-J}
\alpha\sinh v-\beta_L\sinh(u_L-v)+\beta_R\sinh(v-u_R)=0,
\end{equation}
whereas for the quadratic-log comparator $Q$ ($\Psi=\tfrac12 t^2$) it solves
\begin{equation}
\label{eq:threesite-Q}
\alpha v-\beta_L(u_L-v)+\beta_R(v-u_R)=0,
\qquad
v_Q=\frac{\beta_L u_L+\beta_R u_R}{\alpha+\beta_L+\beta_R}.
\end{equation}
The two objectives share the same quadratic Taylor polynomial at the reference,
while their minimizer maps first differ at cubic order in the boundary
amplitudes; $v_J\neq v_Q$ for all parameters outside a measure-zero set.
\end{proposition}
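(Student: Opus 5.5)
Each part of Proposition~\ref{prop:threesite} can be checked directly from the objective~\eqref{eq:threesite-obj}. The stationarity equations come from differentiating $C_I$ in~$v$. For $\Psi=\cosh-1$ we have $g=\sinh$, which gives~\eqref{eq:threesite-J}. For $\Psi=\tfrac12t^2$ we have $g(t)=t$, which gives the linear equation~\eqref{eq:threesite-Q}; solving it yields the displayed $v_Q$. Both objectives are strictly convex, so each has a unique minimizer (Proposition~\ref{prop:P3} with $\Psi''>0$). The shared quadratic Taylor polynomial follows because $\cosh t-1=\tfrac12t^2+t^4/24+O(t^6)$. The two objectives therefore agree through second order in $(v,u_L,u_R)$ jointly, and the quartic terms of $\Psi$ are the first to differ.

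For the cubic-order claim, I would scale the boundary data as $(u_L,u_R)=\epsilon(a_L,a_R)$ and expand $v_J(\epsilon)=\epsilon v_1+\epsilon^2v_2+\epsilon^3v_3+\dots$. The minimizer is smooth in~$\epsilon$ by the implicit-function theorem, since $\partial_v$ of the left side of~\eqref{eq:threesite-J} is $\alpha\cosh v+\beta_L\cosh(u_L-v)+\beta_R\cosh(v-u_R)>0$. Next, substitute $\sinh t=t+t^3/6+O(t^5)$ into~\eqref{eq:threesite-J}. At order~$\epsilon$ the equation is the linear one, so $v_1=(\beta_La_L+\beta_Ra_R)/S$ with $S:=\alpha+\beta_L+\beta_R$, and this matches $v_Q$. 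The stationarity function is odd jointly in $(v,u_L,u_R)$, so $v_2=0$. At order~$\epsilon^3$,
\[
S\,v_3=-\tfrac16\Bigl[\alpha v_1^3-\beta_L(a_L-v_1)^3+\beta_R(v_1-a_R)^3\Bigr]=:-\tfrac16 D.
\]
Hence $v_J-v_Q=-\epsilon^3D/(6S)+O(\epsilon^5)$. The minimizer maps first differ at cubic order whenever $D\not\equiv0$. One explicit point suffices to show this: take $a_R=0$, so $v_1=\beta_La_L/S$, and then $D=a_L^3\beta_L[\alpha\beta_L^2-(\alpha+\beta_R)^3]/S^3$ after simplification. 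This is nonzero for generic weights, for instance whenever $\beta_R$ is large.

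The step I expect to be the main obstacle is the global generic claim that $v_J\neq v_Q$ outside a measure-zero set. My plan is an analyticity argument. The quantity $\Delta:=\alpha\sinh v_Q-\beta_L\sinh(u_L-v_Q)+\beta_R\sinh(v_Q-u_R)$ is the $J$-stationarity residual evaluated at $v_Q$. Since $v_Q$ is a rational function of the parameters, $\Delta$ is real-analytic on the connected open parameter set $\{\alpha,\beta_L,\beta_R>0,\ u_L,u_R\in\mathbb{R}\}$. By strict monotonicity of the $J$-stationarity function in~$v$, we have $v_J=v_Q$ exactly when $\Delta=0$. The cubic computation above shows that $\Delta\not\equiv0$, because its $\epsilon^3$ coefficient is $D/6\neq0$ at some parameter point. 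The zero set of a nontrivial real-analytic function on a connected open set has Lebesgue measure zero, which gives the claim. The remaining care point is to confirm that the simplified expression for $D$ is nonzero at one concrete parameter choice, say $\alpha=\beta_L=1$, $\beta_R=2$, $a_L=1$, $a_R=0$. In that case $\alpha\beta_L^2-(\alpha+\beta_R)^3=1-27\neq0$.
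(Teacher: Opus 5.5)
Your route is essentially the paper's. You reduce $v_J=v_Q$ to the vanishing of the real-analytic $J$-residual $\Delta$ at $v_Q$, invoke the null zero set of a nontrivial analytic function, and obtain $v_J-v_Q=-D/(6S)+O(\|u\|^5)$ from $\sinh t=t+t^3/6+\cdots$. The differences are minor. The paper certifies $\Delta\not\equiv0$ by exact evaluation, independent of any expansion: at $\alpha=\beta_L=\beta_R=1$, $u_R=0$, $u_L=3s$ it gets $\Delta=2\sinh s\,(1-\cosh s)$. It also expands the stationarity equation about $v_Q$ with a derivative estimate, whereas you scale along rays and use oddness to kill the $\epsilon^2$ term.

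One computation is wrong, although it does not sink the argument. With $a_R=0$ you have $D=(\alpha+\beta_R)v_1^3-\beta_L(a_L-v_1)^3$. Your simplification kept $\alpha v_1^3$ but dropped $\beta_R(v_1-a_R)^3=\beta_R v_1^3$. Using $a_L-v_1=(\alpha+\beta_R)a_L/S$, the correct value is
\[
D=\frac{a_L^3\beta_L(\alpha+\beta_R)\bigl[\beta_L^2-(\alpha+\beta_R)^2\bigr]}{S^3}
=\frac{a_L^3\beta_L(\alpha+\beta_R)(\beta_L-\alpha-\beta_R)}{S^2}.
\]
At your test point ($\alpha=\beta_L=1$, $\beta_R=2$, $a_L=1$) this gives $D=-3/8$, not the $-26/64$ your formula gives. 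It is still nonzero, so your conclusions stand.

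However, the exceptional set your formula suggests is the wrong one. The cubic coefficient actually vanishes on $\beta_L=\alpha+\beta_R$ with $u_R=0$. There the $J$-equation becomes $\beta_L[\sinh v-\sinh(u_L-v)]=0$, so $v_J=v_Q=u_L/2$ exactly for every $u_L$. This is a concrete piece of the measure-zero coincidence set.

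Because your route derives $\Delta\not\equiv0$ from $D$, it is more exposed to such slips than the paper's direct evaluation. Either correct the formula, or certify nonvanishing at one point by direct substitution into $\Delta$.
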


\begin{proof}
Both $\cosh-1$ and $\tfrac12 t^2$ are strictly convex and the arguments
in~\eqref{eq:threesite-obj} are affine in~$v$, so each $C_I$ is strictly convex
with a unique critical point, giving
\eqref{eq:threesite-J}--\eqref{eq:threesite-Q}. By strict convexity of the
$J$-objective, $v_J=v_Q$ holds if and only if $v_Q$ is also its critical
point, i.e.\ if and only if
\[
\mathcal{A}(u_L,u_R,\alpha,\beta_L,\beta_R)
  :=\alpha\sinh v_Q-\beta_L\sinh(u_L-v_Q)+\beta_R\sinh(v_Q-u_R)=0,
\]
with $v_Q$ given by~\eqref{eq:threesite-Q}. The vanishing of the $O(\|u\|^3)$
term alone does not establish this; we argue instead from analyticity. The
map~$\mathcal{A}$ is real-analytic in $(u_L,u_R,\alpha,\beta_L,\beta_R)$ on
$\alpha,\beta_L,\beta_R>0$ and is not identically zero: at
$\alpha=\beta_L=\beta_R=1$, $u_R=0$, $u_L=3s$ one has $v_Q=s$ and
\[
\mathcal{A}=2\sinh s-\sinh 2s=2\sinh s\,(1-\cosh s),
\]
which is nonzero for every $s\neq0$. The zero set of a real-analytic function that is not identically zero has empty
interior and Lebesgue measure zero~\cite{KrantzParks2002}, so
$\mathcal{A}\neq0$, and hence $v_J\neq v_Q$, for all
$(u_L,u_R,\alpha,\beta_L,\beta_R)$ outside a measure-zero set. To locate the leading separation, substitute
$\sinh t=t+t^3/6+\cdots$ into $\mathcal{A}$. The linear part vanishes by the
definition of~$v_Q$ in~\eqref{eq:threesite-Q}, leaving the cubic residual
\[
\mathcal{A}(v_Q)
=\frac{\alpha v_Q^3-\beta_L(u_L-v_Q)^3+\beta_R(v_Q-u_R)^3}{6}
+O(\|u\|^5),
\]
where $\|u\|:=\max\{|u_L|,|u_R|\}$. Writing
$\mathcal{A}_J(v):=\alpha\sinh v-\beta_L\sinh(u_L-v)+\beta_R\sinh(v-u_R)$
for the $J$-stationarity residual as a function of~$v$, one has
$\mathcal{A}_J(v_Q)=\mathcal{A}(u_L,u_R,\alpha,\beta_L,\beta_R)$ and
$\mathcal{A}_J'(v_Q)=\alpha+\beta_L+\beta_R+O(\|u\|^2)$. Taylor expansion of
the stationarity equation about~$v_Q$, together with that derivative estimate,
yields
\[
v_J-v_Q
=\frac{-\alpha v_Q^3+\beta_L(u_L-v_Q)^3-\beta_R(v_Q-u_R)^3}
       {6(\alpha+\beta_L+\beta_R)}+O(\|u\|^5).
\]
In the special case $\alpha=\beta_L=\beta_R=1$, $u_R=0$, $u_L=3s$ one has
$v_Q=s$ and leading displacement $v_J-v_Q=s^3/3+O(s^5)$. Thus the minimizer
maps agree through quadratic order in~$\|u\|$ and first differ at cubic order. The objectives share their quadratic Taylor polynomial and first
differ at quartic order, since $J(e^t)=\cosh t-1=t^2/2+t^4/24+O(t^6)$ and
$Q(e^t)=t^2/2$ both have vanishing cubic term; the induced minimizer maps
nonetheless first differ at cubic order in the boundary amplitudes, because
stationarity differentiates the objective.
\end{proof}

The cubic near-reference separation accounts for the local proximity of $J$
and~$Q$. The numerical design of Section~\ref{sec:numerics} works away from that
shared near-reference regime. Stationarity conditions for additional comparators
are recorded in Supplemental~S6.

\section{Finite-sweep discrimination of generators}
\label{sec:numerics}

The experiments below are synthetic and belong to level~(b) of
Section~\ref{sec:levels}: finite-data discrimination within the calibrated
family~$J_p$ under a specified observation model, not nonparametric
reconstruction of~$g$. Table~\ref{tab:design} collects the full specification of the primary design.
A secondary three-site discriminator is summarized at the end, with full
details in Supplemental~S10.

\begin{table}[htbp]
\centering
\caption{Specification of the synthetic primary sweep (Section~\ref{sec:primary-sweep}).
All quantities were fixed before any synthetic draws.}
\label{tab:design}
\footnotesize
\begin{tabular}{@{}>{\raggedright\arraybackslash}p{0.22\linewidth}p{0.72\linewidth}@{}}
\toprule
Item & Specification \\
\midrule
Geometry
& One free site joined by a single edge to a neighbor pinned at~$u$; weight
ratio $r=\alpha/\beta$; equilibria from~\eqref{eq:gadget-stat}. \\
\addlinespace
Design grid
& $(u,r)\in\{2,3,4\}\times\{0.25,0.5,1,2,4\}$ ($15$ conditions). The three
$r=1$ conditions carry no information about~$p$ and are symmetry controls. \\
\addlinespace
Observation model
& $n=7$ independent replicates per condition; additive Gaussian noise on the
log-location~$v$ with $\sigma=0.05$; $105$ observations per dataset. \\
\addlinespace
Candidates
& $J$ ($p=1$) against a fully specified~$J_p$,
$p\in\{1.02,1.05,1.10,1.15,1.20\}$ (simple versus simple). \\
\addlinespace
Decision rule
& Select~$J$ if and only if $\mathrm{SSE}_J<\mathrm{SSE}_{J_p}$ over all $105$
observations, scored against nominal predictions; ties are not counted
for~$J$. \\
\addlinespace
Errors-in-variables screen
& Shared and condition-specific Gaussian errors on~$u$ and on~$\log r$ with
total standard deviation~$w$ and variance split $0.7{:}0.3$; the same realized
errors enter both data-generating models. \\
\addlinespace
Estimator of~$p$
& Least squares on condition means over the grid $p\in[0.85,1.35]$, spacing
$0.005$; Wald standard error fixed at $I(1)^{-1/2}$. Its nuisance simulation
splits the variance $0.5{:}0.5$. \\
\addlinespace
Forward solver
& Bounded scalar minimization (selection) or Brent root-finding (screens,
estimator); agreement with the closed form~\eqref{eq:vstar-Jp} within
$3\times10^{-8}$ on the grid. $\partial_p v^\star$ by central differences,
$h=10^{-4}$. \\
\addlinespace
Monte Carlo
& $N=2\times10^4$ datasets (selection, headline screens); $4000$ per width
(width scans); $2000$ and $1500$ (estimator, exact and $w=0.10$). Seeds
\texttt{20260713} (selection, screens) and \texttt{20260730} (estimator);
NumPy \texttt{default\_rng} (PCG64). \\
\bottomrule
\end{tabular}
\end{table}

\subsection{Primary two-site boundary/weight sweep}
\label{sec:primary-sweep}
The design is the minimal asymmetric configuration: one free site joined by a
single edge to a neighbor pinned at boundary multiplier $a=e^{u}$, with no other
edge incident on the free site. Equilibria are the unique minimizers of the
two-site objective, characterized by~\eqref{eq:gadget-stat}. Because each candidate
generator predicts a full vector of equilibrium locations across a
prespecified grid, the natural analyses are (i)~pairwise model selection between
fully specified generators by aggregate residual, and (ii)~least-squares
estimation of the reciprocal-family index~$p$ (with $p=1$ giving~$J$).

By~\eqref{eq:vstar-Jp}, predicted locations, their derivatives in~$p$, and
hence the Fisher information below are available in closed form. The archived
code evaluates them numerically (Table~\ref{tab:design}); evaluating
$\partial_p v^\star_p$ analytically from~\eqref{eq:vstar-Jp} reproduces the
reported Fisher information to four significant figures. At $r=1$ one has
$v^\star_p=u/2$ for every~$p$, so equal-weight conditions are retained only as
symmetry controls.

\paragraph{Decision rule and observation-noise-only pairwise selection}
Fix in advance the grid $u\in\{2,3,4\}$ crossed with
$\alpha/\beta\in\{0.25,0.5,1,2,4\}$ ($15$ points), with $n=7$ independent
replicates per condition, each the true log-location plus Gaussian noise of
standard deviation~$\sigma=0.05$. Selection simulations used $N=2\times10^4$
synthetic datasets and seed~\texttt{20260713} with NumPy's
\texttt{default\_rng} (bit generator \texttt{PCG64}); Monte Carlo sizes and
seeds for every reported quantity are listed in Table~\ref{tab:design}.
The residual statistic is the total sum of squared errors (SSE) over all
$15\times7$ replicate observations against each candidate's nominal closed-form
predictions~\eqref{eq:vstar-Jp}; residuals are not computed from
condition means alone. Candidate~$J$ is selected when
$\mathrm{SSE}_J<\mathrm{SSE}_{J_p}$; equal SSE is a tie and is not counted as a
selection of~$J$ (margin~$0$). By~\eqref{eq:vstar-Jp} the conditions with
$\alpha/\beta=1$ predict the same location for every member of the family, so
they carry zero local Fisher information about~$p$
($\partial_p v^\star_p(u,1)=0$ for every $u$ and $p$) and contribute identical
residuals to both candidates, so they cancel exactly from any pairwise SSE
difference. The effective informative design is therefore the asymmetric
$12$-point sub-grid $\alpha/\beta\in\{0.25,0.5,2,4\}$; the three equal-weight
conditions are retained only as symmetry controls and add no discriminating
power to either the selection statistic or the Fisher information for~$p$. Under this rule the
observation-noise-only simulated selection probabilities are those of
Table~\ref{tab:p-selection}: $0.997$ at $p=1.1$, $0.921$ at $p=1.05$, and
$0.720$ at $p=1.02$. These figures assume every boundary~$u$ and weight
ratio~$\alpha/\beta$ is known exactly, which is what ``observation-noise-only''
means here and for the three-site $0.828$ figure.

In this observation-noise-only setting the selection probability is also
available exactly, which provides an analytical benchmark for the simulation.
Write $\delta_k:=v^\star_{J_p}(u_k,r_k)-v^\star_J(u_k,r_k)$. When~$J$
generates the data, $\mathrm{SSE}_J-\mathrm{SSE}_{J_p}
=2\sum_k\delta_kS_k-n\sum_k\delta_k^2$, where $S_k$ is the sum of the~$n$
noise draws at condition~$k$, so the difference is Gaussian with mean
$-n\sum_k\delta_k^2$ and variance $4n\sigma^2\sum_k\delta_k^2$. Hence
\begin{equation}
\label{eq:exact-selection}
\Pr(\text{select }J\mid J\text{ true})
=\Phi\!\left(\frac{\sqrt{n\sum_k\delta_k^2}}{2\sigma}\right),
\end{equation}
with $\Phi$ the standard normal distribution function; by symmetry
$\Pr(\text{select }J\mid J_p\text{ true})$ is one minus this value. Evaluated
with the closed form~\eqref{eq:vstar-Jp}, \eqref{eq:exact-selection} gives
$0.7161$, $0.9206$, $0.9971$, $0.99997$, and $1-7\times10^{-8}$ for
$p=1.02,\ldots,1.20$ (Table~\ref{tab:p-selection}), and every simulated entry
lies within $1.2$ Monte Carlo standard errors of the exact value. The
observation-noise-only pairwise selection probabilities can therefore be
evaluated analytically; Monte Carlo serves as an independent implementation
check. Simulation remains useful for assessing estimator behavior and for the
errors-in-variables experiments, whose nuisance distributions introduce
additional uncertainty.

\begin{table}[!htbp]
\centering
\caption{Observation-noise-only selection probabilities, simulated and exact, for pairwise
comparison of~$J$ with a fully specified rival~$J_p$ in the $15$-point
single-exchange sweep ($n=7$ independent replicates per
condition, $\sigma=0.05$, $N=2\times10^4$,
seed~\texttt{20260713}). The first row reports the simulated probability of
selecting~$J$ when~$J$ generates the data; the second reports selection of~$J$
when~$J_p$ generates the data. This is simple-versus-simple model selection, not a composite test of
$\{p\neq1\}$. The Monte Carlo row gives the binomial standard error, common to
both simulated rows; the exact row is~\eqref{eq:exact-selection}.}
\label{tab:p-selection}
\begin{tabular}{lccccc}
\toprule
$p$ & $1.02$ & $1.05$ & $1.10$ & $1.15$ & $1.20$ \\
\midrule
$\Pr(\text{select }J\mid J\text{ true})$ & $0.720$ & $0.921$ & $0.997$ & $1.000$ & $1.000$ \\
$\Pr(\text{select }J\mid J_p\text{ true})$ & $0.282$ & $0.078$ & $0.003$ & $0.000$ & $0.000$ \\
Monte Carlo s.e. & $0.003$ & $0.002$ & $0.0004$ & $<0.001$ & $<0.001$ \\
Exact, \eqref{eq:exact-selection} & $0.7161$ & $0.9206$ & $0.9971$ & $0.99997$ & $>0.99999$ \\
\midrule
$\max_k|v^\star_J-v^\star_{J_p}|$ & $0.011$ & $0.026$ & $0.050$ & $0.073$ & $0.095$ \\
\bottomrule
\end{tabular}
\end{table}
\FloatBarrier

\paragraph{Representation-uncertainty screening for the primary sweep}
The three-site representation-uncertainty analysis does not transfer to
this geometry. We therefore re-simulated the sweep under a screening
errors-in-variables model: for each Monte~Carlo draw and each grid point~$k$,
\[
u_k^{\mathrm{true}}=u_k+\varepsilon^{\mathrm{sh}}_u+\varepsilon^{(k)}_u,\qquad
(\alpha/\beta)_k^{\mathrm{true}}
=(\alpha/\beta)_k\exp\bigl(\varepsilon^{\mathrm{sh}}_{\rho}+\varepsilon^{(k)}_{\rho}\bigr),
\]
with independent zero-mean Gaussians whose shared and condition-specific
variances split~$w^2$ in the proportions $0.7{:}0.3$, that is
$\varepsilon^{\mathrm{sh}}\sim\mathcal{N}(0,0.7\,w^2)$ and
$\varepsilon^{(k)}\sim\mathcal{N}(0,0.3\,w^2)$. The two levels are nested
explicitly: within one simulated dataset a single shared draw
$\varepsilon^{\mathrm{sh}}_u$ (and one $\varepsilon^{\mathrm{sh}}_{\rho}$)
applies to all $15$ grid points, whereas the condition-specific draws
$\varepsilon^{(k)}$ are independent across grid points but common to the $7$
replicates at a point; only the additive observation noise~$\sigma$ is
independent replicate to replicate. The same realized nuisance enters both
candidate generators' data-generating maps, while the analyst scores SSE
against the nominal closed-form predictions (Supplemental~S9.3). The SSE
decision rule and tie convention above are unchanged. At the nominal
width $w=0.10$, the simulated probability of selecting~$J$ against~$J_{1.1}$
falls from~$0.997$ to~$\approx0.88$;
under the same screen, the model-specific probability of selecting~$J$ when
$J_{1.1}$ generates the data is~$\approx0.10$. At $w=0.15$ it
is~$\approx0.79$. A reduced
$12$-point grid that drops the $\alpha/\beta=1$ controls gives $0.877$ at
$w=0.10$; because those conditions cancel exactly from the pairwise statistic,
the gap from~$0.882$ is Monte Carlo variation at $N=4000$ per width, not a loss
of information. Shared-only errors are milder than
condition-specific ones of the same width ($0.97$ versus $0.77$ at $w=0.10$);
adding a batch-level random effect of size~$0.03$ on the replicates does not
materially change the $w=0.10$ figure. Against nearer rivals the degradation is
stronger: at $w=0.10$, the screening selection probability is~$\approx0.72$ at $p=1.05$ and
$\approx0.59$ at $p=1.02$. Figure~\ref{fig:primary-sweep} summarizes the primary
sweep. Headline probabilities used $N=2\times10^4$ Monte Carlo datasets
(binomial Monte Carlo standard error at most~$0.004$); width scans used
$N=4000$ per point (standard error at most~$0.008$).

\paragraph{Least-squares estimation of $p$}
Table~\ref{tab:p-selection} reports simple-versus-simple selection probabilities
against fixed~$J_p$ alternatives. Separately, under the exact-parameter Gaussian
observation model the local Fisher information is constructed in closed form from
$\partial_p v^\star_p$ via~\eqref{eq:vstar-Jp}: for independent replicates of
variance~$\sigma^2$,
\[
I(p)=\sigma^{-2}\sum_{k=1}^{15}n\bigl(\partial_p v^\star_p(u_k,r_k)\bigr)^2,
\]
so at $p=1$ the Cram\'er--Rao lower bound is
$\operatorname{sd}(\hat p)\ge1/\sqrt{I(1)}=0.017$ for the $15$-point sweep
(versus~$0.13$ at the single three-site point). Direct Monte Carlo of the
least-squares estimator minimized the sum of squared deviations between the
$15$ condition means and predicted locations on the fixed grid
$p\in[0.85,1.35]$ with spacing~$0.005$; ties are broken by taking the smallest grid~$p$ attaining the minimum
(\texttt{argmin}), and boundary solutions are kept at the corresponding
endpoint. The Wald standard error used for the reported coverage and for the
test of $H_0{:}\,p=1$ is the fixed value $1/\sqrt{I(1)}$, evaluated at $p=1$
rather than at each fitted~$\hat p$; this is the idealized exact-parameter
construction, and it is precisely why the intervals undercover once nuisance
errors are admitted. With parameters known exactly the estimator attained that
scale over $N=2000$ datasets (seed~\texttt{20260730}): bias $-0.0004$,
empirical sd~$0.0172$, Wald $95\%$ coverage~$0.943$, and type-I error of
$H_0{:}\,p=1$ equal to~$0.057$ (binomial Monte Carlo standard error
about~$0.005$). The simulated rejection proportion for $H_0$ at
$p_{\mathrm{true}}=1.1$ was $1.000$. Across three independent seeds, the
exact-parameter intervals attained coverage $0.934$--$0.938$ (approximately
$94\%$), modest undercoverage partly attributable to the estimator's $0.005$
grid; this finite-grid effect is distinct from the much larger coverage loss
under calibration uncertainty. The estimator's nuisance simulation uses
errors of the same form as the selection screen above, with total width
$w=0.10$ but with shared and condition-specific variances split equally
rather than $0.7{:}0.3$ ($N=1500$). Under that model the empirical standard
deviation inflated to~$0.050$, and the naive exact-parameter Wald intervals
covered $p_{\mathrm{true}}=1$ in only~$0.50$ of datasets, so the nominal
$5\%$ test of $H_0{:}\,p=1$ rejected a true null in~$0.50$ of datasets.
Exact-parameter inference is therefore invalid once calibration errors of
this size are present. The~$0.017$ figure is a lower bound under exact
parameters, and valid interval estimation under nuisance uncertainty would
require profiling, integrating, or bootstrapping those errors.

\begin{figure}[htbp]
\centering
\begin{minipage}[t]{0.485\linewidth}
\centering
\begin{tikzpicture}
\begin{axis}[
  width=0.86\linewidth, height=0.70\linewidth,
  xlabel={family index $p$},
  ylabel={$\Pr(\mathrm{select}\ J\mid J)$},
  xmin=1.0, xmax=1.22, ymin=0.45, ymax=1.05,
  legend style={at={(0.5,-0.40)}, anchor=north, font=\footnotesize,
    draw=gray!45, fill=white, legend columns=-1},
  tick label style={font=\footnotesize}, label style={font=\footnotesize},
  error bars/error bar style={line width=0.5pt},
]
\addplot[blue,mark=*,mark size=1.6pt,
  error bars/.cd, y dir=both, y explicit] coordinates {
  (1.02,0.720) +- (0,0.0032) (1.05,0.921) +- (0,0.0019)
  (1.10,0.997) +- (0,0.0004) (1.15,1.000) +- (0,0.0001)
  (1.20,1.000) +- (0,0.0001)};
\addlegendentry{exact parameters}
\addplot[red,mark=square*,mark size=1.6pt,
  error bars/.cd, y dir=both, y explicit] coordinates {
  (1.02,0.589) +- (0,0.0035) (1.05,0.722) +- (0,0.0032)
  (1.10,0.878) +- (0,0.0023) (1.15,0.957) +- (0,0.0014)
  (1.20,0.988) +- (0,0.0008)};
\addlegendentry{$w{=}0.10$ screen}
\end{axis}
\end{tikzpicture}
\end{minipage}\hfill
\begin{minipage}[t]{0.485\linewidth}
\centering
\begin{tikzpicture}
\begin{axis}[
  width=0.86\linewidth, height=0.70\linewidth,
  xlabel={uncertainty width $w$},
  ylabel={selection probability},
  xmin=0, xmax=0.16, ymin=0.75, ymax=1.02,
  xtick={0,0.05,0.10,0.15}, xticklabels={$0$,$0.05$,$0.10$,$0.15$},
  scaled x ticks=false,
  legend style={at={(0.5,-0.40)}, anchor=north, font=\footnotesize,
    draw=gray!45, fill=white, legend columns=-1},
  tick label style={font=\footnotesize}, label style={font=\footnotesize},
  error bars/error bar style={line width=0.5pt},
]
\addplot[gray!70,dashed,domain=0:0.16] {0.8};
\addlegendentry{benchmark $0.8$}
\addplot[blue,mark=*,mark size=1.5pt,
  error bars/.cd, y dir=both, y explicit] coordinates {
  (0.010,0.995) +- (0,0.0011) (0.015,0.994) +- (0,0.0012)
  (0.020,0.994) +- (0,0.0012) (0.030,0.989) +- (0,0.0017)
  (0.050,0.970) +- (0,0.0027) (0.070,0.939) +- (0,0.0038)
  (0.100,0.882) +- (0,0.0051) (0.150,0.794) +- (0,0.0064)};
\addlegendentry{vs.\ $J_{1.1}$}
\end{axis}
\end{tikzpicture}
\end{minipage}
\caption{Primary single-exchange sweep. Left: probability of selecting~$J$ over a
fixed rival~$J_p$ when~$J$ generates the data, with all boundaries and weight
ratios known exactly and under the $w=0.10$ representation-uncertainty screen.
Right: the corresponding screening probability against~$J_{1.1}$ versus nuisance
width~$w$ (Supplemental~S9.3). Error bars are $\pm1$ binomial Monte Carlo
standard error ($N=2\times10^4$ left, $N=4000$ right) and are smaller than the
markers at several points. The $0.8$ line is a design benchmark for comparing
conditions. The plotted quantity is the simulated probability of selecting the
data-generating model, which is distinct from the rejection rate of a
level-controlled test.}
\label{fig:primary-sweep}
\end{figure}
\FloatBarrier

\paragraph{Feasibility and scope}
The design calls for boundary ratios $a=e^u\approx7.4$, $20$, and~$54.6$.
Ratios of this order occur in representative mammalian-cell concentration
gradients: transmembrane
concentration ratios around $145{:}12\approx12$ for $\mathrm{Na}^+$,
$140{:}4\approx35$ for $\mathrm{K}^+$, and $110{:}10\approx11$ for
$\mathrm{Cl}^-$~\cite{Weiss1996}, i.e.\ $u\approx2.5$, $3.6$, and~$2.4$, all
within the range $2\le u\le4$ spanned by the grid. These gradients are
maintained by active transport and are not passive equilibrium boundaries of
the present model; they are cited only to show that the design's boundary
ratios are of an ordinary physical magnitude, and whether such ratios can be
imposed as controlled pinned boundaries is among the untested requirements of
Section~\ref{sec:outlook}. A $12$-point grid (dropping
$\alpha/\beta=1$) reduces the total number of observations from~$105$ to~$84$ without
discarding any pairwise-selection information. The sweep is a
within-family discriminator: it separates prespecified members of the~$J_p$
family and estimates~$p$. Each two-site
condition separates the non-reciprocal comparators $Q$, $S$, and~$D$ weakly;
the large single-configuration separations for those comparators come from the
asymmetric three-site discriminator (summary below; Supplemental~S10).

\subsection{Secondary discriminator (summary)}
\label{sec:threesite-study}
As a secondary mathematical discriminator, an
asymmetric three-site system separates~$J$ from four fixed comparators
calibrated to unit curvature at the reference:
\[
Q(x)=\tfrac12(\log x)^2,\qquad
S(x)=\tfrac12(x-1)^2,\qquad
D(x)=x\log x-(x-1),
\]
together with an elastic reference~$H_{\mathrm{el}}$ defined by quadratic
penalties on linear-coordinate differences
(full formula in Supplemental~S10). At the specified
operating point $(\alpha,\beta_L,\beta_R,a_L,a_R)=(1,2,1,10,0.2)$ the
nearest comparator is~$D$, with log-location separation
$|v_J-v_D|=0.056$. The observation-noise-only simulated selection probability
is~$0.828$; under the stated representation-uncertainty screening model it falls
to~$\approx0.33$. Supplemental~S10 reports the corresponding comparator
predictions, decision rule, replicate count, and nuisance scan.

\FloatBarrier

\section*{Remark on forward dynamics}
\label{sec:variational}

\noindent
The inverse results of Sections~\ref{sec:ident}--\ref{sec:numerics} use only
stationarity of the reduced cost~$\widetilde C$ and therefore do not depend on
a particular relaxation mechanism. For the reciprocal specialization
$\Psi(t)=\cosh t-1$, the unforced gradient flow
$R\dot y=-\nabla\widetilde C(y)$ (constant positive-definite friction~$R$)
has~$\widetilde C$ as a strict Lyapunov function and converges exponentially to
the unique constrained minimizer. Under the finite-horizon regularity
assumptions stated in Supplemental~S3, the corresponding forced Langevin model
converges to the deterministic flow as~$\varepsilon\to0$; when~$\xi\equiv0$, the
diffusion is reversible with invariant density proportional to
$e^{-\widetilde C/\varepsilon}$. The interval balance, exponential relaxation,
and stochastic realization are collected in Supplemental~S2--S3; see
equation~(S2) and Propositions~S2--S3
(cf.~\cite{Mielke2011,AdamsDirrPeletierZimmer2011,MielkePeletierRenger2014,JordanKinderlehrerOtto1998,Santambrogio2017,Polyak1963,KarimiNutiniSchmidt2016,Khasminskii2012,FreidlinWentzell2012}).

\section{Physical interpretation and scope}
\label{sec:outlook}

Theorems~\ref{thm:response-ident}, \ref{thm:fixed-ratio}, and~\ref{thm:chain}
state what exact response data would determine. They do not establish that any physical system is
governed by a functional of the form~\eqref{eq:graph-functional}, nor that its
generator is~$J$. This section records what has not been validated and the
constitutive and modeling assumptions a physical interpretation would require.

\paragraph{What has not been validated}
(i)~No physical system has been shown to realize the controlled two-site
response map, which requires a free compartment with a restoring term
independent of the reservoir, a single open exchange edge, a pinned boundary,
and an effective weight ratio that is either known and sweepable or held
fixed at a value different from one; (ii)~no
generator, $J$ or otherwise, has been reconstructed from measured equilibria,
so the paper makes no claim about which generator, if any, describes a real
system; (iii)~the observation noise $\sigma=0.05$ and the nuisance widths~$w$
of Section~\ref{sec:numerics} are assumed screening values rather than
instrument characterizations, so the simulated selection probabilities are
design quantities, not predictions of experimental performance; (iv)~the
energetic reading of~$C_{\mathcal{M}}$ and the work balance of
Supplemental~S3 are untested; and (v)~the transmembrane ion ratios quoted in
Section~\ref{sec:numerics} indicate only the order of magnitude of the design
boundary ratios. Each item is a prerequisite for, not a consequence of, the
results of Section~\ref{sec:ident}.

Theorems~\ref{thm:fixed-ratio} and~\ref{thm:chain} weaken the control
requirements, since one fixed and possibly unknown ratio suffices, but they do
not show that the precision requirements are weaker. The fixed-ratio
reconstruction reads~$g$ through responses at progressively smaller boundary
values, where relative location errors are largest, and its numerical
conditioning under perturbed response curves is unresolved
(Section~\ref{sec:fixed-ratio}). The calibration-uncertainty results of
Section~\ref{sec:numerics} concern a different, variable-ratio parametric
design and do not quantify the noise behavior of the fixed-ratio design,
which remains open.

\paragraph{Relation to conventional free energies}
For ideal-mixture concentration transport, standard free-energy expressions are
relative-entropy-like rather than of the
form~$J$~\cite{RaoEsposito2016,Qian2007}; the comparator~$D$ of
Section~\ref{sec:numerics} is representative of that family. Any energy-like
reading of~$C_{\mathcal{M}}$ would therefore be an empirical calibration rather
than a derivation: one would introduce a conversion constant with units of
energy per dimensionless cost unit, estimate it on a quasistatic dataset held
separate from any test interval, and treat the nonlinear form of the
functional, not its scale, as the testable content. As
Supplemental~S4 records, such a constant is identified only relative to a fixed
normalization of the graph weights, so the normalization convention would have to
be specified in advance and never refitted.

\paragraph{Constitutive requirements for a dynamical interpretation}
The dissipation identity recorded in Supplemental equation~(S2) (Section~S3) becomes a physical work
balance only after $\xi$ is identified with a measurable generalized force,
$\xi\cdot\dot y$ with physical power, and $\dot y^{\top}R\dot y$ with physical
dissipation. Each identification is an assumption about a particular system, to
be supplied by that realization. In particular the boundary log-ratio~$u$ and any external
tilt~$\xi$ play distinct roles and must not be double-counted: a maintained
reservoir enters through the edge term and shifts the passive equilibrium, which
is a boundary condition rather than an applied force.

\paragraph{Modeling requirements}
Four modeling commitments require independent justification in a physical
realization. First, the pairwise homogeneous ansatz of
Corollary~\ref{thm:graph-propagation} imposes one common local shape on every
site and edge term; a system in which site and edge deviations are penalized
differently falls outside it. Second, the free-site term
$\alpha\Psi(v)$ requires a restoring mechanism independent of the reservoir
edge, since passive exchange with a single reservoir alone would equilibrate at
the boundary. Representing a binding, elastic, or partitioning contribution
through the one-coordinate potential requires a measured equilibrium
relation and demonstrated timescale separation. Third, the one-coordinate
reduction assumes a single open exchange current. With two simultaneous edge
currents the physical power cannot be reconstructed from the rate of change of a
single amount and one effective affinity, and two maintained unequal reservoirs
generically sustain a through-current at steady concentration; such
configurations require explicit edge-current coordinates and lie outside the
present model. Fourth, fixing the representation before any outcome is scored
rules out one form of post hoc adjustment; the physical meaningfulness of the
chosen graph, references, and measurement transforms, and their invariance under
equivalent descriptions, are separate questions, so a representation-sensitivity
analysis belongs in any protocol built on this model.

\paragraph{Requirements for a physical realization}
A physical program built on these results would name an analyte and platform,
establish the constitutive relation underlying the free-site term, calibrate
the controlled boundary values and effective weight ratios independently, or
hold a fixed unequal ratio for the designs of Theorems~\ref{thm:fixed-ratio}
and~\ref{thm:chain}, and then measure equilibrium locations over a prescribed
sweep. Those response
data could then be used to reconstruct the generator, or in a finite
parametric setting to test candidate generators such as~$J$. Energetic
calibration and tests of the supplemental work balance would constitute a
separate downstream question.

\section{Discussion and conclusions}
\label{sec:discussion}

The main results are constructive. For a shared generator in
class~\eqref{eq:Psi-class}, equilibrium locations under controlled boundary
values and weight ratios determine the generator up to a positive scale,
through the explicit inversion formula $r=g(u-v^\star)/g(v^\star)$
(Theorem~\ref{thm:response-ident}). One boundary sweep at a fixed, possibly
unknown, ratio $r\neq1$ also determines it, through Koenigs' linearization of
a Schr\"oder equation that the response curve supplies
(Theorem~\ref{thm:fixed-ratio}). Calibration of the slope at the origin
removes the scale. Finite target lists recover ratios of~$g$ on a grid, with
explicit error bounds under forward root-finding and under noisy bisection
(Corollary~\ref{cor:finite-query}, Proposition~\ref{prop:bisection}), and
recovered log-ratios are locally Lipschitz stable in the location, boundary,
and ratio data (Corollary~\ref{cor:stability}). The results transfer to a
single free vertex of a finite graph under the pinned-neighbor protocol
(Corollary~\ref{cor:graph-reduction}), and the fixed-ratio identification
transfers to a chain with two free vertices, whose leaf sees its free neighbor
as a boundary, under an explicit sweep condition and unequal leaf weights
(Theorem~\ref{thm:chain}).
Relative to inverse optimization, the designed equilibria replace a set of
consistent objectives by an exact one-parameter ambiguity. Relative to
boundary-response inverse problems, the data are locations only and the
unknown is a single shared law of one variable; the local Lipschitz estimate
reflects that different problem and is not a statement about Calder\'on
stability.

A notable feature of the reconstruction is that the observable is only the
equilibrium location: neither the value of the objective nor a boundary
flux is required. Variation of the controlled weight ratio converts these
locations into ratios of $g=\Psi'$, while variation of the boundary probes
different arguments of~$g$. At a fixed unequal ratio the boundary sweep alone
suffices, but the information is then reached through an iteration toward the
origin. The price for this reduced observable is the global scale
non-identifiability and the need for accurate control of the inputs: by
Corollary~\ref{cor:stability}, boundary errors enter the recovered log-ratios
at the same order as location errors.

The reciprocal cost~$J$ enters only as an independently motivated candidate.
The synthetic sweep shows that, within the parametric family~$J_p$ and under
the stated models, a $15$-condition design selects~$J$ over~$J_{1.1}$ with
simulated probability~$0.997$ under exact parameters and about~$0.88$ under the
$w=0.10$ errors-in-variables screen, and that input-calibration uncertainty,
not observation noise, limits precision; these are design properties of the
simulation (level~(b) of Section~\ref{sec:levels}).

\paragraph{Open problems}
\begin{enumerate}
  \item \emph{General graphs with several free vertices.}
    Corollary~\ref{cor:graph-reduction} treats one free vertex and
    Theorem~\ref{thm:chain} a chain in which a free leaf sees an observed,
    sweeping neighbor. On a general graph each stationarity equation couples
    several values of~$g$ with unknown weights. Whether location data
    determine~$g$, and under what graph conditions, is open, as is the case
    of a leaf whose neighbor's coordinate does not approach the reference
    value.
  \item \emph{Unknown and varying weights.} Theorem~\ref{thm:fixed-ratio}
    identifies one fixed unknown ratio together with~$g$, and
    Theorem~\ref{thm:chain} needs no knowledge of the weights. When unknown
    ratios drift within a sweep, or differ between individual equilibria
    rather than between complete sweeps, or when the individual weights are
    sought, weights and generator must be recovered jointly. The scale ambiguity then interacts with weight rescalings
    (Supplemental~S4), and the identifiable set is not characterized here.
  \item \emph{Stability of the fixed-ratio reconstruction.} The
    factor~$\varrho^{n}$ in~\eqref{eq:koenigs} amplifies perturbations in the
    terminal iterate, errors in the response-derived map~$T$ can accumulate
    along the orbit, and truncating the iteration leaves a linearization bias.
    A stability estimate under perturbed response curves, a truncation rule,
    or a regularized fixed-ratio reconstruction is open. Continuous-dependence
    results for Schr\"oder's and Julia's equations on the given
    map~\cite{KuczmaChoczewskiGer1990,AlfaroVigo2020} are a natural starting
    point; what is needed is a bound in terms of perturbations of the
    response curve, which enter~$T$ through the inverse~$\ell^{-1}$.
  \item \emph{Finite noisy data.} Nonparametric estimation of~$g$ from
    finitely many noisy equilibria with uncertain controls, including
    estimators, regularization, convergence rates, data-driven bounds on the
    constants of Corollary~\ref{cor:stability} and
    Proposition~\ref{prop:bisection}, and the correlated errors-in-variables
    models of Section~\ref{sec:numerics}, is not addressed. The
    inverse-variational-inequality estimators
    of~\cite{Bertsimas2015,ThaiBayen2018} are natural candidates.
  \item \emph{Stability of the calibration.} The constants of
    Corollary~\ref{cor:stability} degenerate near the origin, where the
    calibration slope is read; a quantitative stability estimate for the
    absolute scale is open.
  \item \emph{Physical realization.} Section~\ref{sec:outlook} lists the
    constitutive calibrations, and the items not yet validated, that connect
    the inverse framework to a realized system.
\end{enumerate}

\section*{Data availability}

No measured datasets were created or analyzed in this study; every numerical
value in Section~\ref{sec:numerics} and Supplemental~S9, S10, and~S12 follows
from the fixed equations and observation models stated there. The Supplemental Material
and the reproducibility files are distributed as ancillary files with this
arXiv record (directory \texttt{anc/}). The directory
\texttt{anc/reproducibility/} contains the Python entry points
\texttt{RS\_reciprocal\_graph\_numerics.py},
\texttt{RS\_uncertainty\_and\_sweep.py},
\texttt{independent\_check\_numerics.py}, and
\texttt{fixed\_ratio\_reconstruction\_check.py}, which checks
Theorems~\ref{thm:fixed-ratio} and~\ref{thm:chain} and
Proposition~\ref{prop:bisection} numerically; pinned dependencies in
\texttt{requirements.txt} (NumPy~2.4.4, SciPy~1.17.1, pytest~8.3.5);
\texttt{expected\_outputs.txt}; regression tests under \texttt{tests/}; a
README; and an MIT license. Selection and screening simulations use
seed~\texttt{20260713}, estimator simulations seed~\texttt{20260730}, and the
bisection checks seed~\texttt{20261023}. With the pinned versions, the four
scripts reproduce
\texttt{expected\_outputs.txt} line for line (verified on CPython~3.11.4 and
3.11.10), including every value in Table~\ref{tab:p-selection} and
Figure~\ref{fig:primary-sweep}; with other library versions, agreement is
expected within binomial Monte Carlo error. Supplemental~S11 records the
complete reproduction commands. Deferred proofs and numerical details are in
Supplemental~S1--S12.

\section*{Author contributions}

\textbf{Jonathan Washburn}: Conceptualization, Investigation, Methodology,
Resources, Software, Supervision, Validation, Writing -- original draft,
Writing -- review \& editing.
\textbf{Megan Simons}: Formal analysis, Investigation, Project administration,
Validation, Visualization, Writing -- review \& editing.

\section*{Funding}

This research did not receive any specific grant from funding agencies in the
public, commercial, or not-for-profit sectors.

\section*{Conflict of interest}

The authors declare that they have no competing interests.

\section*{Acknowledgments}

We thank colleagues at the Recognition Physics Institute (Elshad Allahyarov,
Sebastian Pardo-Guerra, Anil Thapa, Dylan Funk, Philip Beltracchi, and Emma
Tully) for discussion and review, particularly of the inverse-problem
formulation, three-site design, and physical interpretation.

\end{document}